\newtheorem{theorem}{Theorem}%
\newtheorem{definition}{Definition}%
\newtheorem{property}{Property}%
\newtheorem{corollary}{Corollary}%
\newtheorem{lemma}{Lemma}%
\newtheorem{observation}{Observation}%
\newcommand{\intInterval}[1]{[\![#1]\!]}
\journal{Discrete Applied Mathematics}
\begin{document}
\begin{frontmatter}

	\title{Finding \texorpdfstring{$k$}{k}-community structures in special graph classes}

	\author[1]{Narmina Baghirova\corref{cor1}\orcidlink{0000-0002-4702-673X}}
	\cortext[cor1]{Corresponding author}
	\ead{narmina.baghirova@unifr.ch}
	\author[2]{Cl{\'e}ment Dallard\orcidlink{0000-0002-9522-3770}}
	\ead{clement.dallard@ens-lyon.fr}
	\author[1]{Bernard Ries\orcidlink{0000-0003-4395-5547}}
	\ead{bernard.ries@unifr.ch}
	\author[1]{David Schindl\orcidlink{0000-0002-7009-5530}}
	\ead{david.schindl@unifr.ch}

	\affiliation[1]{organization={Department of Informatics, University of Fribourg},%
		addressline={Bd de Perolles 90},
		city={Fribourg},
		postcode={1700},
		country={Switzerland}}

	\affiliation[2]{organization={LIP, ENS de Lyon},%
	addressline={46 All{\'e}e d'Italie},
	city={Lyon},
	postcode={69364},
	country={France}}

	\begin{abstract}
		For an integer $k\ge 2$, a \emph{$k$-community structure} in an undirected graph is a partition of its vertex set into $k$ sets called \textit{communities}, each of size at least two, such that every vertex of the graph has proportionally at least as many neighbours in its own community as in any other community.
		In this paper, we give a necessary and sufficient condition for a forest on $n$ vertices to admit a $k$-community structure.
		Furthermore, we provide an $\mathcal{O}(k^2\cdot n^{2})$-time algorithm that computes such a $k$-community structure in a forest, if it exists.
		These results extend a result of [Bazgan et al., Structural and algorithmic properties of $2$-community structure, Algorithmica, 80(6):1890-1908, 2018].
		We also show that if communities are allowed to have size one, then every forest with $n \geq k\geq 2$ vertices admits a $k$-community structure that can be found in time $\mathcal{O}(k^2 \cdot n^{2})$.
		We then consider threshold graphs and show that every connected threshold graph admits a $2$-community structure if and only if it is not isomorphic to a star; also if such a $2$-community structure exists, we explain how to obtain it in linear time.
		We further describe an infinite family of disconnected threshold graphs, containing exactly one isolated vertex, that do not admit any $2$-community structure.
		Finally, we present a new infinite family of connected graphs that may contain an even or an odd number of vertices without $2$-community structures, even if communities are allowed to have size one.
	\end{abstract}

	\begin{keyword}
		graph partitioning \sep algorithms \sep complexity \sep community structure \sep threshold graphs \sep forests
	\end{keyword}
\end{frontmatter}

\section{Introduction}\label{sec-intro}
When it comes to modelling social networks, graphs are a powerful tool.
The vertices of the graphs often represent individuals, while the edges represent links/relationships between them.
This underlying model allows us to analyse social networks from a structural perspective.
In the research field around social networks, a particular focus has been put on detecting so-called \textit{communities}.
In a network, a community can intuitively be seen as a subset of vertices of the graph that are more densely connected to each other than to the vertices of the rest of the network.
Problems motivated by community detection in networks can mostly be put under the same umbrella as vertex partitioning problems and problems related to finding dense subgraphs. 
These problems have been widely studied and have applications in many different domains like for instance numerical analysis~\cite{GPNumAnylysis}, scientific simulations~\cite{GPScientificSim}, bioinformatics~\cite{GPBioinformatics} and quantum computing~\cite{GPQuantumComp} (see~\cite{graphpartitioning} for an overview regarding partitioning problems).

In partitioning problems, the parts of a valid partition must respect some constraints expressed in terms of other parameters, such as the number of edges between the parts, the number of neighbours of each vertex within the different parts, or the size of the parts.
However, these constraints usually do not combine those parameters.

For instance, let us consider the \emph{Satisfactory Partition} problem introduced in~\cite{SatPartition}.
This problem and several of its variants have been intensively studied (see for instance~\cite{NoteOnSatPart,BalSatPart,SatPartParamComp,SatPartAlgApproach}).
The \emph{Satisfactory Partition} problem consists in deciding if a given graph has a partition of its vertex set into two nonempty parts such that each vertex has at least as many neighbours in its own part as in the other part.
This condition may not reflect the \textit{natural} partition into two communities that the graph could admit. Consider the example in which a vertex/individual is linked to 10 other vertices/individuals belonging to community~1 and to 100 vertices/individuals belonging to community~2. According to the definition of a satisfactory partition, this vertex/individual clearly belongs to community~2. However, if there is no further vertex/individual in the network belonging to community~1, but there are 900 further vertices/individuals in the network belonging to community~2 (but not linked to our particular vertex/individual), then our vertex/individual is connected to everyone in community~1, while only linked to a small proportion of vertices/individuals in community~2. Thus, it seems natural that this vertex/individual should be a part of community~1.

Therefore, instead of considering the exact number of neighbours in each part, one may focus on the \emph{proportion} of neighbours in each part and require that each vertex has proportionally at least as many neighbours in its own part as in the other part.
This new constraint effectively combines two parameters: the number of neighbours within the parts and the sizes of the parts.
This was the standpoint suggested by Olsen in 2013 (see~\cite{olsenGenView}), who argued that the notion of \emph{proportionality} in the definition of community structures is both intuitive and supported by observations in real world networks; something that previous attempts at defining communities (see for example~\cite{def2,def3}) failed to capture.

In this paper, we follow Olsen's view on communities.
The formal definition of a community structure is as follows.

\begin{definition}(Olsen~\cite{olsenGenView})
	A \emph{community structure} of a connected graph $G=(V,E)$ is a partition $\Pi$ of the vertex set $V$ such that:
	\begin{itemize}
		\item $|\Pi|\ge 2$,
		\item for every $C\in \Pi$, $\vert C\vert \ge 2$,
		\item for every $C\in \Pi$, every $v \in C$ and every $C'\in \Pi$, $C\neq C'$, the following property holds:
		      \begin{equation}
			      \frac{\vert N_{C} (v)\vert}{\vert C \vert -1} \ge \frac{\vert N_{C'} (v)\vert}{\vert C' \vert}
		      \end{equation}
		      where $N_{C} (v)$ (resp.\ $N_{C'} (v)$) is the set of neighbours of $v$ in $C$ (resp.\ in $C'$).
	\end{itemize}
\end{definition}

In other words, a community structure of a connected graph $G=(V,E)$ is a partition of its vertex set into at least two sets (called the \textit{communities} of $G$), each containing at least two vertices, such that every vertex is adjacent to proportionally at least as many vertices in its own community as to vertices in any other community.

In~\cite{olsenGenView}, Olsen showed that a community structure can be found in polynomial time in any connected graph containing at least four vertices, except the stars.
On the other hand, he showed that determining whether a graph admits a community that contains a predefined set of vertices is \textsf{NP}-complete.
Notice that in the definition introduced by Olsen in~\cite{olsenGenView}, the exact number of communities is not given, i.e., the only restriction is that there are at least two communities.
In~\cite{bazgan:k-comm, Estivill1}, the notion of \textit{$k$-community structure} was first used in order to fix the number of communities to some integer $k\geq 2$, i.e., a $k$-community structure is a community structure $\Pi$ with $|\Pi|=k$, meaning it contains exactly $k$ communities.

In~\cite{Estivill2}, it was also shown that deciding whether there exists a connected $k$-community structure (i.e. a $k$-community structure in which the vertices of each community induce a connected subgraph) such that each community has the same size is \textsf{NP}-complete. Note that $k$ is, in this case, part of the input.
However, much more results have been obtained for the special case when $k=2$.
First, it was shown in~\cite{Estivill1}, that deciding whether a graph admits a $2$-community structure such that both communities have equal size is \textsf{NP}-complete.
In~\cite{bazgan:k-comm,Estivill2}, the authors showed independently that every tree with $n\ge4$ vertices which is not isomorphic to a star admits a $2$-community structure and that such a $2$-community structure can be found in time $\mathcal{O}(n)$, even if we require the vertices in each community to induce a connected subgraph.
Furthermore, the authors in \cite{bazgan:k-comm} also showed that except for stars, graphs of maximum degree 3, graphs of minimum degree at least $\vert V \vert-3$, and complement of bipartite graphs always admit a $2$-community structure, which can be found in polynomial time, even if we require connectivity of the communities.
Recently, the authors of~\cite{LCP} introduced a framework, which allows to solve a special family of partitioning problems in polynomial time in classes of graphs of bounded clique-width. As an application, they showed that the problem of deciding whether there exists a $k$-community structure and finding such a structure, if it exists, can be solved in polynomial time in classes of graphs of bounded clique-width. 
Notice that in general graphs, the complexity of deciding whether a given graph admits a $k$-community structure is still open, even if $k$ is a fixed integer (not part of the input).
In fact, until 2020 no graphs (except for stars) not admitting any $2$-community structure were known; the first infinite family of such graphs was presented in~\cite{bazgan:firstInfiniteFamily}.

In the original definition of community structure introduced by Olsen, and the one of $k$-community structure introduced in~\cite{bazgan:k-comm, Estivill1}, each community must contain at least two vertices.
One may relax this constraint and only require a community to be non-empty, i.e., to contain at least one vertex.
It is important to note that allowing communities of size one does not make the problem necessarily trivial and is a natural generalization of the above definition of a community structure.
In our paper, we use this version as well and call such a partition a \textit{generalized $k$-community structure}.
In~\cite{bazgan:firstInfiniteFamily}, the authors introduce the notion of \textit{proportionally dense subgraph (PDS)}, and in particular so-called \textit{$2$-PDS partitions}, which correspond exactly to generalized $2$-community structures.
Our notion of generalized $k$-community structure can therefore be seen as a generalisation of $2$-PDS partitions to $k$-PDS partitions.
The authors of~\cite{bazgan:firstInfiniteFamily} present two infinite families of graphs: (i)~one infinite family containing graphs with an even number of vertices that do not contain any generalized $2$-community structure; (ii)~one infinite family containing graphs that do not admit any connected generalized $2$-community structure.

In this paper, we consider several graph classes and contribute with the following results.
First, we extend a result of~\cite{bazgan:k-comm,Estivill2} by characterizing, for any $k\geq 2$, the forests on $n$ vertices admitting a $k$-community structure. We also propose an $\mathcal{O}(k^2\cdot n^{2})$ algorithm to construct such a $k$-community structure, if it exists (see \cref{sec:trees}), and provide a similar result for generalized $k$-community structures. Notice that a polynomial time algorithm for finding a $k$-community structure in forests was already provided in~\cite{LCP}, since such graphs have clique-width at most 3. However, the proposed algorithm is XP parametrized by $k$. We therefore substantially improve on its complexity.
Second, we show that any threshold graph admits a generalized 2-community structure that can be found in linear time.
Then, we give a characterization of connected threshold graphs on $n$ vertices and $m$ edges that admit a $2$-community structure, and show that, if it exists, it can be found in time $\mathcal{O}(n + m)$.
For the case of disconnected threshold graphs, we show that the only such graphs not admitting a $2$-community structure must contain exactly one isolated vertex.
We exhibit an infinite family of such graphs (see \cref{sec-thresh}).
Finally, we present an infinite family of connected graphs not admitting any generalized $2$-community structure.
In contrast to~\cite{bazgan:firstInfiniteFamily}, where another such family has been presented but with the restriction that every graph of the family has an even number of vertices, our family contains graphs with an even number of vertices and graphs with an odd number of vertices (see \cref{sec-family}).

\section{Preliminaries}
\label{sec-prelim}

\subsection{Definitions and notation}

In this paper, all graphs are simple and undirected.
Let $G=(V,E)$ be a graph.
The \emph{neighbourhood} of a vertex $v\in V$ is denoted by $N(v)$, its \emph{closed neighbourhood} by $N[v] := N(v) \cup \{v\}$ and its \emph{degree} by $d(v):=\vert N(v)\vert$.
The \emph{neighbourhood} of $v\in V$ \emph{in $V'\subseteq V$}, denoted by $N_{V'}(v)$, is the set of vertices that are both in $V'$ and adjacent to $v$, i.e., $N_{V'}(v):=N(v)\cap V'$.
We say that a vertex $v\in V$ is \emph{universal} if $N[v]=V$.
For $v_1,v_2 \in V$, we say that $v_1$ and $v_2$ are \emph{true twins}, if $N[v_1]=N[v_2]$; we say that they are \emph{false twins}, if $N(v_1)=N(v_2)$.

A \emph{$k$-partition $\Pi$ of $G$} is a partition of $V$ into $k$ subsets $C_1,\ldots,C_k$.
The \textit{subgraph} of $G=(V,E)$ \textit{induced by $S\subseteq V$} is defined as $G[S]:=(S,\{uv \in E: u\in S \text{ and } v\in S\})$.
We denote by $\intInterval{t}$, with $t \in \mathbb{N}$, the set $\{1, 2, \ldots, t\}$.
We say that a $k$-partition $\Pi=\{C_1,\ldots,C_k\}$ is \emph{connected}, if $G[C_i]$ is a connected subgraph, for all $i \in\intInterval{k}$.

A \emph{tree} is a connected, acyclic graph and a \emph{forest} is an acyclic graph.
A \emph{star} $S_n$ is a tree on $n+1$ vertices, where exactly one vertex has degree $n$ (called the \textit{center}) and all of its $n$ neighbours have degree $1$ (called the \textit{leaves}).

Let $A$ be a finite, totally ordered set, called \emph{alphabet}.
The \emph{lexicographic order} on the set of all $k$-tuples of symbols from $A$, for some positive integer $k$, is the total order such that, for two such distinct tuples $a = (a_1,a_2,\ldots, a_k)$ and $b = (b_1,b_2,\ldots, b_k)$, the tuple $a$ is smaller than $b$ with respect to the lexicographic order, denoted by $a < b$, if and only if there exists $i \in \intInterval{k}$ with $a_i < b_i$ and $a_j=b_j$ for all $j<i$ in the underlying order of the alphabet~$A$.

Let us now formally define the main concept of our paper, namely $k$-community structures (as used in~\cite{bazgan:k-comm}).

\begin{definition}
	A (connected) \emph{$k$-community structure} of a graph $G=(V,E)$ is a (connected) $k$-partition $\Pi = \{C_1,\ldots,C_k\}$ of the vertex set $V$ such that:
	\begin{itemize}
		\item $k\ge 2$,
		\item for all $i\in \intInterval{k}$, $\vert C_i \vert \ge 2$,
		\item for all $i \in \intInterval{k}$, all $v \in C_i$ and all $j\in \intInterval{k}$, $j\neq i$, the following property holds:
		      \begin{equation}
			      \label{prop:k-comm2}
			      \frac{\vert N_{C_i} (v)\vert}{\vert C_i \vert -1} \ge \frac{\vert N_{C_j} (v)\vert}{\vert C_j \vert}\,.
		      \end{equation}
	\end{itemize}
\end{definition}

We say that a vertex $v \in C_i$, for $i \in \intInterval{k}$, is \emph{satisfied with respect to $\Pi$}, if it satisfies \eqref{prop:k-comm2} for all $j\in \intInterval{k}\backslash \{i\}$.
In this paper, we sometimes allow communities to have size one, thus slightly generalize the definition of a $k$-community structure.
In the above definition, besides changing $\vert C_i \vert \ge 2$ into $\vert C_i \vert \ge 1$, we reformulate \eqref{prop:k-comm2} as
\begin{equation}
	\label{prop:k-comm1}
	\vert N_{C_i} (v)\vert \cdot \vert C_j \vert \ge \vert N_{C_j} (v)\vert \cdot(\vert C_i \vert -1)\,.
\end{equation}

We call such a $k$-partition $\Pi$ a \emph{generalized $k$-community structure}, and we say that a vertex $v \in C_i$, for $i \in \intInterval{k}$, is \emph{satisfied with respect to $\Pi$}, if it satisfies \eqref{prop:k-comm1} for all $j\in \intInterval{k}\backslash \{i\}$.

Notice that the non-existence of a generalized $k$-community structure in a graph implies the non-existence of a $k$-community structure in this graph, but the converse is not necessarily true.
Indeed, consider the star $S_{n}$ with center $u$ and $n$ leaves $v_1,\ldots,v_n$.
As already observed in~\cite{olsenGenView}, $S_n$ does not admit any $k$-community structure for $k\geq 2$, but it contains a generalized $k$-community structure for any $2\leq k\leq n+1$.
Indeed, for $i=1,\ldots,k-1$, set $C_i=\{v_i\}$, and finally set $C_k=\{u,v_k,\ldots,v_n\}$.
It is easy to see that $\{C_1,\ldots,C_k\}$ yields a generalized $k$-community structure.
Further, notice that any property that holds for a generalized $k$-community structure holds in particular for a $k$-community structure, but the converse is not necessarily true.

\subsection{Properties}
\label{sec:properties}

In this section, we introduce several properties that are useful to show our results in the upcoming sections.
First, we introduce two properties that give sufficient conditions for a vertex to belong to a particular community within a (generalized) community structure.

\begin{property}
	\label{allneighboursSameCommunity}
	Let $G=(V,E)$ be a graph, let $\Pi=\{C_1, C_2\}$ be a $2$-community structure of $G$ and let $v \in V$ such that $d(v)\geq 1$.
	If $N_{C_i}(v)=\emptyset$, for some $i\in \{1,2\}$ then $v\in C_{3-i}$.
\end{property}

\begin{proof}
	Since $d(v)\geq 1$, $N_{C_i}(v)=\emptyset$ implies  $N_{C_{3-i}}(v)\ge 1$.
	Assume by contradiction that $v\in C_i$.
	Then, inequality \eqref{prop:k-comm1} applied to $v$ yields the following contradiction:
	\[\vert N_{C_i} (v)\vert \cdot \vert C_{3-i} \vert=0  \ge \vert N_{C_{3-i}}(v)\vert \cdot (\vert C_i \vert -1)\geq 1\,.\]
\end{proof}

\begin{property}
	\label{allCommunityInneighbourhood}
	Let $G=(V,E)$ be a graph and $\Pi=\{C_1, \ldots, C_k\}$ be a generalized $k$-community structure of~$G$.
	Consider $i, j \in \intInterval{k}$, $i\neq j$ and $v \in V$.
	If $C_i\subseteq N[v]$ and $C_j\not\subseteq N[v]$, then $v\not\in C_j$.
\end{property}

\begin{proof}
	Assume that $v\in C_j$.
	Then we have
	\[\vert N_{C_j} (v) \vert \cdot \vert C_i \vert < (\vert C_j\vert - 1)\cdot \vert C_i \vert  =  (\vert C_j\vert - 1) \cdot \vert N_{C_i}(v)\vert ,\]
	which also contradicts \eqref{prop:k-comm1}.
\end{proof}

If we have a community of size one, we obtain the following corollary.

\begin{corollary}
	\label{kcommUnivSizeOne}
	Let $G=(V,E)$  be a graph and let $\Pi=\{C_1, \ldots, C_k\}$ be a generalized $k$-community structure of~$G$.
	If $C_i=\{u\}$ for some $i\in \intInterval{k}$ and $u\in V$, then for every neighbour $v$ of $u$, we have $C_j \subseteq N[v]$, where $v \in C_j, j\neq i$.%
\end{corollary}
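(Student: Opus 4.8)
The plan is to recognize this statement as an almost immediate consequence of \cref{allCommunityInneighbourhood}, read in its contrapositive direction. Recall that \cref{allCommunityInneighbourhood} asserts: if $C_i \subseteq N[v]$ while $C_j \not\subseteq N[v]$, then $v \notin C_j$. Logically contraposing the second and third clauses, this says that whenever $v \in C_j$ \emph{and} $C_i \subseteq N[v]$, we must have $C_j \subseteq N[v]$. The entire task therefore reduces to checking that the hypotheses of \cref{allCommunityInneighbourhood} are met in the singleton setting, with the singleton playing the role of the community that is contained in the closed neighbourhood.

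First I would fix an arbitrary neighbour $v$ of $u$. Since $v$ is adjacent to $u$ we have $v \neq u$, and because the community $C_i$ equals the singleton $\{u\}$, the vertex $v$ cannot lie in $C_i$; hence $v$ belongs to some community $C_j$ with $j \neq i$, which is exactly the index referred to in the statement. The key observation is that the singleton hypothesis makes the containment $C_i \subseteq N[v]$ hold for free: since $v \in N(u)$ we also have $u \in N(v) \subseteq N[v]$, and as $C_i = \{u\}$ this gives $C_i \subseteq N[v]$ with no further work.

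With these two facts in hand I would invoke \cref{allCommunityInneighbourhood} directly. We have $C_i \subseteq N[v]$ and $v \in C_j$; if it were the case that $C_j \not\subseteq N[v]$, then \cref{allCommunityInneighbourhood} would force $v \notin C_j$, contradicting $v \in C_j$. Therefore $C_j \subseteq N[v]$, which is precisely the claimed conclusion. Since $v$ was an arbitrary neighbour of $u$, the statement follows for every neighbour.

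I do not anticipate any real obstacle here: the mathematical content is entirely carried by \cref{allCommunityInneighbourhood}, and the only points requiring care are purely bookkeeping, namely noting that a neighbour $v$ of $u$ necessarily sits in a community distinct from the singleton $\{u\}$, and that the singleton assumption is used exactly to supply the hypothesis $C_i \subseteq N[v]$. The one thing I would double-check is the direction of the implication, to make sure I am applying the contrapositive of \cref{allCommunityInneighbourhood} rather than the statement itself.
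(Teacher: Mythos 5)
Your proof is correct and matches the paper's intended argument exactly: the paper derives \cref{kcommUnivSizeOne} as an immediate consequence of \cref{allCommunityInneighbourhood}, with the singleton $C_i=\{u\}$ supplying the hypothesis $C_i\subseteq N[v]$ and the contrapositive forcing $C_j\subseteq N[v]$. The bookkeeping you add (that $v\neq u$, hence $v\notin C_i$ and $j\neq i$) is precisely the detail the paper leaves implicit.
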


In particular, if $k=2$, the condition is also sufficient.
Therefore, the next property characterizes those graphs admitting a generalized $2$-community structure, where at least one community is of size 1.

\begin{property}
	\label{2commUniv}
	A graph $G=(V,E)$ admits a generalized $2$-community structure $\{C_1,C_2\}$ such that $C_i=\{u\}$, for some $i\in \intInterval{2}$ and $u\in V$, if and only if every neighbour of $u$ is universal.
\end{property}
\begin{proof}
	The necessity follows from \Cref{kcommUnivSizeOne} with $k=2$, and the sufficiency follows from the observation that $\{\{u\},V\backslash \{u\}\}$ is a generalized $2$-community structure of~$G$.
\end{proof}

The next property is used to show that a graph $G=(V,E)$ does not admit a certain generalized $2$-community structure $\{C_1,C_2\}$.

\begin{property}\label{prop:preassignment}
	Let $G=(V,E)$ be a graph and let $C_1', C_2'\subset V$ such that $C_1',C_2'\neq \emptyset$ and $C_1'\cap C_2' =\emptyset$.
	Let $U=V\backslash (C_1'\cup C_2')$ and $v\in U$.
	If $v$ is not satisfied with respect to $\{C_1'\cup (U\cap N[v]), C_2'\cup (U\backslash N[v])\}$, then $G$ admits no generalized $2$-community structure $\Pi=\{C_1,C_2\}$ such that $(C_1'\cup\{v\})\subseteq C_1$ and $C_2'\subseteq C_2$.
\end{property}

\begin{proof}
	Notice that by assumption $|C_1'|,|C_2'|\geq 1$ and hence, $|C_1|\geq 2$.
	If $v$ is not satisfied with respect to $\{C_1'\cup (U\cap N[v]), C_2'\cup (U\backslash N[v])\}$, we know that
	\begin{equation}
		\frac{\vert N_{C_1'\cup U} (v)\vert}{\vert C_1'\cup (U\cap N[v]) \vert-1} <
		\frac{\vert N_{C_2'} (v)\vert}{\vert C_2'\cup (U\backslash N[v]) \vert}\,.\label{vNotSatisfied}
	\end{equation}
	Notice, that since $v \in U = V\backslash (C_1'\cup C_2')$ and $|C_1'|\geq 1$, we have that $\vert C_1'\cup (U\cap N[v]) \vert> 1$.
	Let $\Pi=\{C_1,C_2\}$ be a partition of $V$ such that $(C_1'\cup\{v\})\subseteq C_1$ and $C_2'\subseteq C_2$. %
	Since $|N_{C_1}(v)|\leq |C_1|-1$,  we have
	\begin{equation}
		\label{equation2}
		\frac{|N_{C_1}(v)|}{|C_1|-1}\leq \frac{|N_{C_1}(v)| + |N_{U\backslash C_1}(v)|}{|C_1|-1+|(N(v)\cap U)\backslash C_1|}= \frac{|N_{C_1\cup U}(v)|}{|C_1\cup (N(v)\cap U)|-1}\,.
	\end{equation}
	Now, since $C_1\subseteq V\backslash C_2' =C_1'\cup U$, we have $C_1\cup U=C_1'\cup U$, and \eqref{equation2} can be rewritten as
	\begin{equation}
		\frac{|N_{C_1}(v)|}{|C_1|-1}\leq\frac{|N_{C_1'\cup U}(v)|}{|C_1\cup (N(v)\cap U)|-1}\,.
	\end{equation}
	From the fact that $(C_1'\cup \{v\})\subseteq C_1$ and from \eqref{vNotSatisfied} it then follows that
	\begin{equation}
		\label{equation4}
		\frac{|N_{C_1}(v)|}{|C_1|-1}\leq\frac{|N_{C_1'\cup U}(v)|}{|C_1'\cup (N[v]\cap U)|-1}<\frac{\vert N_{C_2'} (v)\vert}{\vert C_2'\cup (U\backslash N[v]) \vert}\,.
	\end{equation}
	Since $C_2\subseteq C_2'\cup U$ and $v\not\in C_2$, \eqref{equation4} becomes
	\begin{equation}
		\frac{|N_{C_1}(v)|}{|C_1|-1}<\frac{\vert N_{C_2'} (v)\vert}{\vert C_2'\cup (C_2\backslash N[v]) \vert} = \frac{\vert N_{C_2'} (v)\vert}{\vert C_2' \cup (C2\backslash N(v))\vert}\,.
	\end{equation}
	Also, since $\vert N_{C_2'} (v)\vert\leq \vert C_2'\cup (C_2\backslash N(v)) \vert$  we have
	\begin{equation}
		\frac{|N_{C_1}(v)|}{|C_1|-1}<\frac{\vert N_{C_2'} (v)\vert+\vert N_{C_2\backslash C_2'} (v)\vert}{\vert C_2'\cup (C_2\backslash N(v)) \vert+\vert (N(v)\cap C_2)\backslash C_2'\vert}=\frac{\vert N_{C_2} (v)\vert}{\vert C_2\vert}\,.
	\end{equation}
	Thus, $v$ is not satisfied with respect to $\Pi$, and so $\Pi$ is not a generalized $2$-community structure.
\end{proof}

As mentioned above, we can use \Cref{prop:preassignment} to show that a graph $G=(V,E)$ does not admit a certain generalized $2$-community structure $\{C_1,C_2\}$.
Indeed, if some vertices have already been assigned to some sets $C_1'$ and $C_2'$, and if for some unassigned vertex $v \in U = V \setminus \{C_1',C_2'\}$, inequality \eqref{prop:k-comm1} fails even with respect to the partition $\{C_1'\cup (U\cap N[v]), C_2'\cup (U\backslash N[v])\}$, then in order to possibly complete $C_1',C_2'$ into a generalized $2$-community structure $(C_1,C_2)$ (i.e., $C_1'\subseteq C_1$ and $C_2'\subseteq C_2$), $v$ has to be assigned to~$C_2$. When we apply \Cref{prop:preassignment} and add $v$ and all its neighbours in $U$ to $C_i$, as well as all non-neighbours of $v$ in $U$ to $C_{3-i}$, for $i\in \{1,2\}$, we say that we \textit{test $v$ on $C_i$}.

Next, we consider true twins in generalized $k$-community structures.

\begin{property}
	\label{prop-false}
	Let $u$ and $v$ be two true twins in a graph $G=(V,E)$ that are not universal.
	Then, in any generalized $2$-community structure of $G$, $u$ and $v$ must belong to the same community.
\end{property}

\begin{proof}
	Consider a generalized $2$-community structure $\Pi=\{C_1,C_2\}$ of~$G$.
	Assume by contradiction that $u\in C_1$ and $v\in C_2$.
	Note that $|C_1|>1$, otherwise $C_1\subseteq N[v]$ and since $v$ is not universal, by \Cref{allCommunityInneighbourhood} it would not be satisfied with respect to~$\Pi$.
	Similarly, $|C_2|>1$.
	Thus, $\Pi$ must be a $2$-community structure.

	Since $u$ and $v$ are true twins, we have
	\begin{eqnarray}
		|N_{C_1}(u)|&=&|N_{C_1}(v)|-1\label{twinsC1}\,,\text{~and}\\
		|N_{C_2}(u)|&=&|N_{C_2}(v)|+1\label{twinsC2}\,.
	\end{eqnarray}

	Furthermore, since $C_1$ and $C_2$ form a $2$-community structure, we also have
	\begin{eqnarray}
		\frac{|N_{C_1}(u)|}{|C_1|-1}&\geq & \frac{|N_{C_2}(u)|}{|C_2|}\label{uwrtC2}\,,\text{~and}\\
		\frac{|N_{C_2}(v)|}{|C_2|-1}&\geq & \frac{|N_{C_1}(v)|}{|C_1|}\label{vwrtC1}\,.
	\end{eqnarray}

	Now, by using \eqref{twinsC1} and \eqref{twinsC2}, we can restate \eqref{uwrtC2} as

	\[\frac{|N_{C_1}(v)|-1}{|C_1|-1}\geq \frac{|N_{C_2}(v)|+1}{|C_2|}\,.\]

	On the other hand, $|N_{C_1}(v)|\leq |C_1|$ implies
	\begin{eqnarray}
		\frac{|N_{C_1}(v)|}{|C_1|}\geq \frac{|N_{C_1}(v)|-1}{|C_1|-1}\label{ineq1}
	\end{eqnarray}

	and $|N_{C_2}(v)|+1\leq |C_2|$ implies
	\begin{eqnarray}
		\frac{|N_{C_2}(v)|+1}{|C_2|}\geq \frac{|N_{C_2}(v)|}{|C_2|-1}\,.\label{ineq2}
	\end{eqnarray}

	Since $v$ is not universal, at least one of the inequalities \eqref{ineq1} and \eqref{ineq2} is strict.
	Since the vertices $u$ and $v$ are true twins and we can swap their roles, we may assume that \eqref{ineq1} is strict.
	Then, we have
	\[\frac{|N_{C_1}(v)|}{|C_1|}> \frac{|N_{C_1}(v)|-1}{|C_1|-1}\geq \frac{|N_{C_2}(v)|+1}{|C_2|}\geq \frac{|N_{C_2}(v)|}{|C_2|-1}\,,\]

	which contradicts \eqref{vwrtC1}.
\end{proof}

Our next property concerns $k$-community structures.
It gives a necessary condition for a graph (with no isolated vertices) to admit a $k$-community structure.

\begin{property}
	\label{k-matching}
	If a graph $G=(V,E)$, without isolated vertices, admits a $k$-community structure $\Pi$, then it has a matching of size~$k$.
\end{property}
\begin{proof}
	No community in $\Pi$ can contain a vertex with no neighbours in its own community.
	Then the vertex would have at least one neighbour in another community and could therefore not be satisfied with respect to~$\Pi$.
	Then, for each community $C_i \in \Pi$, $i\in \intInterval{k}$, there is at least one edge with both endpoints in~$C_i$.
	By picking one such edge in each community, we construct a matching of size $k$ in~$G$.
\end{proof}
Finally, we show that given an integer $k\ge 2$, a graph $G=(V,E)$ such that $|V|\ge k$ and a $k$-partition $\Pi$ of the vertex set of the graph, we can check in time $\mathcal{O}(k|V|+|E|)$ whether $\Pi$ forms a (connected) (generalized) $k$-community structure of~$G$.
\begin{lemma}\label{check if kcs linear time}
	Let $G=(V,E)$ be a graph such that $|V|\ge k$, for some given integer $k\ge 2$.
	Let $\Pi=\{C_1,\ldots,C_k\}$ be a $k$-partition of~$V$.
	Then, we can check in $\mathcal{O}(k(|V|+|E|))$ time whether $\Pi$ forms a (connected) (generalized) $k$-community structure of~$G$.
\end{lemma}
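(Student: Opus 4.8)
The plan is to reduce the verification to a constant-per-community test at each vertex, preceded by an inexpensive preprocessing phase. The crucial quantities are the community sizes $|C_1|,\dots,|C_k|$ and, for every vertex $v$, the neighbour counts $|N_{C_1}(v)|,\dots,|N_{C_k}(v)|$. Once these are available, deciding whether a fixed vertex $v\in C_i$ is satisfied with respect to $\Pi$ amounts to testing the relevant inequality for each $j\in\intInterval{k}\setminus\{i\}$, which is $\mathcal{O}(k)$ comparisons per vertex and hence $\mathcal{O}(k|V|)$ in total.

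First I would attach to each vertex a label recording the community it belongs to; scanning $V$ once then yields all the sizes $|C_i|$ in $\mathcal{O}(|V|)$ time. To obtain the neighbour counts, I would process the vertices one at a time using a single reusable integer array of length $k$: for the current vertex $v$, reset the array, traverse the adjacency list of $v$, and, for each neighbour $u$, increment the entry indexed by the label of $u$. The traversals cost $\sum_{v\in V}|N(v)| = 2|E|$ in total, i.e.\ $\mathcal{O}(|E|)$, while the resets cost $\mathcal{O}(k)$ per vertex, i.e.\ $\mathcal{O}(k|V|)$ overall; both are within the claimed budget.

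To test satisfaction without divisions, I would use the cross-multiplied form of the defining inequality. For a $k$-community structure I check, for $v\in C_i$ and each $j\neq i$, whether $|N_{C_i}(v)|\cdot|C_j| \ge |N_{C_j}(v)|\cdot(|C_i|-1)$, which is equivalent to \eqref{prop:k-comm2} because $|C_i|-1\ge 1$ and $|C_j|\ge 2$ are positive; for a generalized $k$-community structure I instead check \eqref{prop:k-comm1} directly, which is already in product form and remains valid when $|C_i|=1$. Each such test involves products of integers bounded by $|V|$, computable in constant time in the word-RAM model, so the whole satisfaction check runs in $\mathcal{O}(k|V|)$. For the connected variant I additionally verify that each $G[C_i]$ is connected by running a single graph search (BFS or DFS) that only follows edges whose two endpoints carry the same community label; this visits each vertex and each edge a constant number of times, so all $k$ connectivity tests together cost $\mathcal{O}(|V|+|E|)$.

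Summing the phases gives a total running time of $\mathcal{O}(k|V|+|E|)$, which lies within $\mathcal{O}(k(|V|+|E|))$. The statement is routine, so there is no genuine obstacle; the only point requiring care is the bookkeeping of the per-vertex neighbour counts. A naive implementation that allocates and initialises a fresh array of size $k$ for every vertex is still within budget, but one must ensure the resets are charged correctly, so that the $\mathcal{O}(|E|)$ scanning term is not inadvertently inflated by a factor of $k$; reusing one array and either clearing all $k$ entries per vertex (giving the $\mathcal{O}(k|V|)$ term already present from the comparisons) or clearing only the touched entries both keep the bound intact.
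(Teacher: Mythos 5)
Your proposal is correct and follows essentially the same approach as the paper's proof: compute community sizes and per-vertex neighbour counts using $k$ counters per vertex in $\mathcal{O}(k|V|+|E|)$ time, verify the defining inequalities in $\mathcal{O}(k|V|)$ time, and check connectivity via BFS/DFS. The only cosmetic differences are your single label-restricted search for connectivity (the paper runs a search per community, still within the $\mathcal{O}(k(|V|+|E|))$ bound) and that in the non-generalized case one should also explicitly verify $|C_i|\ge 2$ for each $i$, which is immediate from the computed sizes.
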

\begin{proof}
	We start by computing the size of each set $C_i$ in $\Pi$, for $i\in\{1,\ldots,k\}$.
	While doing so, we also mark which set each vertex belongs to and check if each set has at least two vertices.
	This can all be done in time $\mathcal{O}(|V|)$.
	Next, for each vertex we need to initialize $k$ counters and scan its neighbourhood to count the number of neighbours it has in each community.
	Since for each vertex this can be done in time $\mathcal{O}(k+d(v))$, overall this can be done in time $\mathcal{O}(k|V | + |E|)$.
	Then, for each vertex $v$ and each community $C_i$, for $i\in\{1,\ldots,k\}$, we check whether \eqref{prop:k-comm1} is satisfied by $v$ with respect to community~$C_i$.
	This can be done in time $\mathcal{O}(k|V|)$.
	If we want to check whether a (generalized) $k$-community structure is connected, we check whether each set in $\Pi$ induces a connected component by using either BFS or DFS.
	This can be done in time $\mathcal{O}(k(|V|+|E|))$.
	Hence, the claim follows.
\end{proof}

\section{(generalized) \texorpdfstring{$k$}{k}-community structures in trees and forests}
\label{sec:trees}

In this section, we prove that for every integer $k \geq 2$, every tree $T=(V,E)$ with at least $k$ vertices admits a connected generalized $k$-community structure, which can be found in time $\mathcal{O}(k^2 \cdot |V|^2)$.
Then, we give a necessary and sufficient condition for a tree $T=(V,E)$ to admit a $k$-community structure.
Furthermore, we show that a $k$-community structure of $T$, if it exists, can be found in time $\mathcal{O}(k^2 \cdot |V|^2)$.
We then generalize these results to forests.
Notice that in forests the connectivity of the communities is not necessarily preserved.
This extends a result from~\cite{bazgan:k-comm,Estivill2}, where the authors give a polynomial time algorithm for finding a $2$-community structure in trees.

We first introduce some definitions.
Let $k \geq 2$ be an integer and $\Pi = \{C_1,\ldots,C_k\}$ be a $k$-partition of a graph~$G$.
The \emph{size tuple} of $\Pi$ is the $k$-tuple $(s_1,\dots,s_k)$ whose elements correspond exactly to the elements from the multiset $\{|C_i|:C_i \in \Pi\}$ and such that $s_i \leq s_{i+1}$ for all $i \in \intInterval{k-1}$.
Note that $\{|C_i|:C_i \in \Pi\}$ is a multiset since we allow sets in $\Pi$ to have the same size.
A connected $k$-partition $\Pi$ of $G$ is said to be \emph{uniform} if its size tuple is lexicographically largest among all connected $k$-partitions of~$G$.
Let $S = (s_1, \dots, s_k)$ and $S' = (s_1', \dots, s_k')$ be two $k$-tuples.
We write $S < S'$ if there exists $j \in \intInterval{k}$ such that $s_j < s_j'$ and $s_i = s_i'$ for all $1 \leq i < j$.
For simplicity, for two $k$-partitions $\Pi$ and $\Pi'$ of $G$ with size tuples $S$ and $S'$, respectively, we write $\Pi < \Pi'$ whenever $S < S'$.

We start with a simple observation regarding trees and connected $k$-partitions.
Given a connected k-partition, if a vertex $v$ in a set has two neighbors $u$ and $w$ in another set $C$, then $v$ together with a path connecting $u$ and $w$ in $C$ (which exists since $C$ induces a connected subgraph) would induce a cycle.
Since a tree does not contain any cycle, we have the following.

\begin{observation}
	\label{observation1trees}
	Let $T=(V,E)$ be a tree, $k \geq 2$ an integer and $\Pi=\{C_1,\ldots,C_k\}$ a connected $k$-partition of~$T$.
	Let $v\in C_i$, for some $i\in \intInterval{k}$.
	Then $v$ has at most one neighbour in $C_j$, for any $j\in \intInterval{k} \setminus \{i\}$.
\end{observation}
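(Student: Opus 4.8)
The plan is to prove Observation~\ref{observation1trees} by a direct contradiction argument that exploits the acyclicity of trees together with the connectivity of each set in the partition $\Pi$. Suppose, for contradiction, that some vertex $v\in C_i$ has (at least) two distinct neighbours $u,w\in C_j$ for some $j\neq i$. I would then use the fact that $\Pi$ is a \emph{connected} $k$-partition, so $T[C_j]$ is connected; in particular, there exists a path $P$ from $u$ to $w$ lying entirely inside $C_j$.

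The key step is to assemble a cycle. Since $u,w\in C_j$ while $v\in C_i$ and $i\neq j$, the vertex $v$ does not lie on the path $P$ (all internal vertices of $P$ and its endpoints are in $C_j$, but $v\in C_i$). Hence the two edges $vu$ and $vw$, together with the $u$--$w$ path $P$, form a closed walk $v,u,\dots,w,v$ whose only repeated vertex is $v$ itself; this is a genuine cycle. This contradicts the assumption that $T$ is a tree and therefore acyclic. Consequently no such pair $u,w$ can exist, so $v$ has at most one neighbour in $C_j$, which is exactly the claim.

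The argument as sketched in the excerpt immediately preceding the statement already carries out essentially this reasoning, so there is no genuine difficulty to overcome; the one point that must be stated carefully is \emph{why} the cycle is simple, i.e.\ why $v$ is not already a vertex of the path $P$. This follows cleanly from the disjointness of the parts of a partition: $P\subseteq C_j$ while $v\in C_i$ and $C_i\cap C_j=\emptyset$. I would make this disjointness explicit rather than leave it implicit, since it is the only place where the partition hypothesis (as opposed to mere connectivity of $T[C_j]$) is used. The remainder is a one-line appeal to the definition of a tree.

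Since the statement is quantified over all $j\in\intInterval{k}\setminus\{i\}$ and all $i\in\intInterval{k}$, the contradiction above applies to an arbitrary such pair $(i,j)$, so the conclusion holds uniformly for every vertex and every foreign community. This completes the proof.
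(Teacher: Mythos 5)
Your proof is correct and is essentially the same argument the paper gives in the paragraph preceding the observation: two neighbours of $v$ in a foreign part $C_j$, joined by a path inside the connected subgraph $T[C_j]$, would close a cycle through $v$, contradicting acyclicity. Your only addition is making explicit that $v\notin C_j$ (so the cycle is simple), a detail the paper leaves implicit but which is a worthwhile clarification.
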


We will now show that every tree $T=(V,E)$, such that $\vert V \vert \ge k$, admits a connected generalized $k$-community structure, which can be found in  time $\mathcal{O} (k^2\cdot|V|^2)$ (see \Cref{thrm: gen k-comm}).
We do so in two main steps.
First, we prove that if there exists a vertex in $V$ that does not satisfy \eqref{prop:k-comm1} with respect to some connected $k$-partition $\Pi$ of $T$, then we can update $\Pi$ in linear time into a connected partition $\Pi'$ of $T$ such that $\Pi < \Pi'$ (see \Cref{not sat implies not lex}).
Then, we present a polynomial upper bound on the number of times one can apply such an update, which results in a polynomial-time algorithm.

\begin{figure}[!ht]
	\centering
	\includegraphics[width=0.8\linewidth]{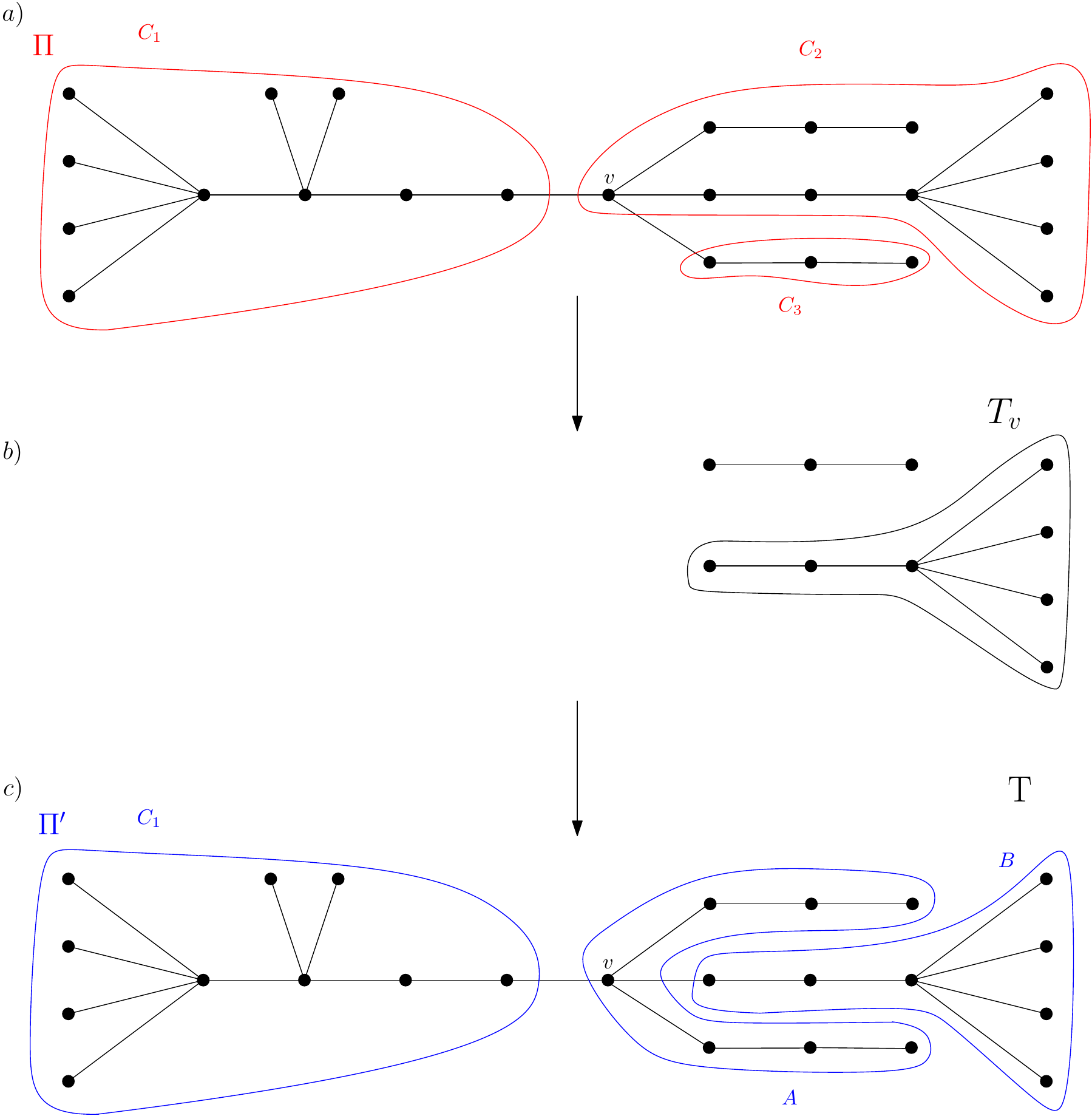}
	\caption{In part a) we illustrate a tree $T$ and a connected $3$-partition $\Pi$, where vertex $v \in C_2$ is not satisfied with respect to~$\Pi$.\newline
		In part b) we illustrate the forest induced by the vertices in $C_2\setminus \{v\}$ and a tree $T_v$ such that $\vert V(T_v)\vert > \vert C_3\vert$.\newline
		In part c) we illustrate the $2$-partition $\Pi' = (\Pi \setminus \{ C_2, C_3\}) \cup \{ A, B\}$.}
	\label{fig:treestransformation}
\end{figure}

\begin{lemma}\label{not sat implies not lex}
	Let $T=(V,E)$ be a tree such that $|V|\ge k$, for some integer $k\ge 2$.
	Let $\Pi$ be a connected $k$-partition of $T$ and let $v \in V$ be a vertex that is not satisfied with respect to~$\Pi$.
	Then there exists a connected $k$-partition $\Pi'$ of $T$ such that $\Pi < \Pi'$, that can be computed in time $\mathcal{O}(|V|)$.
\end{lemma}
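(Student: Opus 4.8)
The plan is to confine all changes to the two communities witnessing $v$'s dissatisfaction and to produce $\Pi'$ by a single edge cut of the subtree they span. Suppose $v\in C_i$ and $v$ violates \eqref{prop:k-comm1} with respect to some $C_j$. By \Cref{observation1trees} we have $|N_{C_j}(v)|\le 1$, and since the right-hand side of \eqref{prop:k-comm1} must be strictly positive for the inequality to fail, this forces $|N_{C_j}(v)|=1$ and $|C_i|\ge 2$ (if $|C_i|=1$ then $v$ is trivially satisfied). Rewriting \eqref{prop:k-comm1}, dissatisfaction reads $|N_{C_i}(v)|\cdot|C_j|<|C_i|-1$; as $T[C_i]$ is connected on at least two vertices we also have $|N_{C_i}(v)|\ge 1$, whence $|C_j|<|C_i|-1$ and in particular $|C_j|=\min(|C_i|,|C_j|)$.

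First I would record the structural fact that, because $T$ is acyclic while $C_i$ and $C_j$ both induce connected subgraphs, there is at most one edge between $C_i$ and $C_j$; since $v$ has a neighbour $u\in C_j$, this unique edge is $vu$, so $T[C_i\cup C_j]$ is a subtree in which $v$ is the only vertex incident to $C_j$. Deleting $v$ from the subtree $T[C_i]$ then splits it into exactly $p:=|N_{C_i}(v)|$ subtrees $D_1,\dots,D_p$ with $\sum_s|D_s|=|C_i|-1$. Combining this with $p\cdot|C_j|<|C_i|-1=\sum_s|D_s|$ and averaging yields a component $D_t$ with $|D_t|>|C_j|$, so that the subtree $T_v:=T[\{v\}\cup D_t]$ has more than $|C_j|$ vertices, as in the figure.

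The new partition is obtained by cutting the single edge joining $v$ to $D_t$: set $A:=D_t$ and $B:=(C_i\cup C_j)\setminus D_t$, and let $\Pi':=(\Pi\setminus\{C_i,C_j\})\cup\{A,B\}$. Here $A$ is connected, being a subtree, and $B$ is connected because it is $T[C_i\cup C_j]$ with the pendant subtree $D_t$ removed: the vertex $v$ remains in $B$ and links the other components $D_s$ ($s\neq t$) as well as all of $C_j$ through $u$. For the sizes, $|A|=|D_t|>|C_j|$, and since $|D_t|\le|C_i|-1$ we obtain $|B|=|C_i|+|C_j|-|D_t|\ge|C_j|+1>|C_j|$. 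Thus $\Pi'$ is a connected $k$-partition whose two modified parts both have size exceeding $\min(|C_i|,|C_j|)=|C_j|$.

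It remains to deduce $\Pi<\Pi'$, which I expect to be the delicate point. The claim is purely combinatorial: replacing a pair of sizes $(|C_i|,|C_j|)$ by a pair $(|A|,|B|)$ of equal total with $\min(|A|,|B|)>\min(|C_i|,|C_j|)$ strictly increases the sorted size tuple in lexicographic order. I would prove it by tracking, for the threshold $x:=|C_j|$, the number of parts of size at most $x$: we delete $C_j$ (of size exactly $x$) and $C_i$ (of size $>x$) and insert $A,B$ (both of size $>x$), so this count drops by exactly one while the number of parts of size $<x$ is unchanged; this forces the first coordinate at which the two sorted tuples differ to be strictly larger for $\Pi'$. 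The main obstacles are precisely this lexicographic bookkeeping and the connectivity of $B$, both resting on the single-cross-edge fact established from acyclicity. Finally, locating a violated $C_j$ by scanning $N(v)$, extracting the components of $T[C_i\setminus\{v\}]$, and selecting $D_t$ are all linear, giving the claimed $\mathcal{O}(|V|)$ bound.
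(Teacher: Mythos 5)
Your proof is correct and follows essentially the same route as the paper's: the same use of \Cref{observation1trees} to force $|N_{C_j}(v)|=1$, the same averaging/pigeonhole argument to locate a component of $T[C_i\setminus\{v\}]$ larger than $|C_j|$, and exactly the same resulting partition (your $A,B$ are the paper's $B,A$). The only difference is presentational: you make explicit the lexicographic bookkeeping (counting parts of size at most $|C_j|$) that the paper asserts in one line, which is a welcome elaboration rather than a new idea.
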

\begin{proof}
	Let $\Pi=\{C_1,\ldots,C_k\}$ be a connected $k$-partition of~$T$.
	Assume that $v \in C_i$, for some $i \in \intInterval{k}$. %
	Then, by \Cref{observation1trees}, the following holds for $v$ and some $C_j$, for $i\neq j \in \intInterval{k}$:
	\begin{equation}
		\label{eq:P into Pi'}
		\vert C_i \setminus \{v\}\vert=\vert C_i \vert -1 \ge \vert N_{C_i}(v) \vert \cdot \vert C_j \vert +1 \,.
	\end{equation}
	Notice that $\vert N_{C_i}(v) \vert \ge 1$, since otherwise the inequality becomes $\vert C_i \vert \ge 2$, which in turn implies $\vert N_{C_i}(v) \vert \ge 1$ since the $k$-partition is connected.
	Therefore the \Cref{eq:P into Pi'} can be rewritten as
	$$\frac{\vert C_i \setminus \{v\}\vert}{\vert N_{C_i}(v)\vert} \ge \vert C_j \vert + \frac{1}{\vert N_{C_i}(v)\vert}> \vert C_j \vert \,.$$

	Now observe that by removing vertex $v$ from $C_i$, we break $C_i$ into at least one and at most $\vert N_{C_i}(v)\vert$ connected components.
	Last inequality tells us that the average size of these components is strictly larger than $|C_j|$.
	By the pigeonhole principle, one of these components, say $T_v$ verifies $\vert V(T_v)\vert\geq \vert C_j \vert $.
	In particular, we have $|C_i| > |C_j|$.

	We are now going to transform $P$ into another connected $k$-partition $\Pi'$ of $T$ such that $\Pi<\Pi'$.

	Let $A = (C_i \setminus V(T_v)) \cup C_j$ and $B = V(T_v)$.
	We define $\Pi'$ in the following way: $\Pi' = (\Pi \setminus \{ C_i, C_j\}) \cup \{ A, B\}$.
	See \Cref{fig:treestransformation} for an illustration with~$k=3$.

	Notice that $T[A]$ induces a connected subgraph of $T$ since $v$ is adjacent to some vertex in~$C_j$.
	Then, $\Pi'$ is a connected $k$-partition of $T$, since $\Pi$ is a connected $k$-partition of $T$ and both $T[A]$ and $T[B]$ induce connected subgraphs of~$T$.
	Furthermore, we observe the following:
	\begin{itemize}
		\item since $V(T_v) \subset C_i$, we have $|A| = |C_i| - |V(T_v)| + |C_j| > |C_j|$;
		\item it follows from the argument above that $|B| = |V(T_v)| > |C_j|$.
	\end{itemize}
	Then, let $S$ and $S'$ be the size tuples of $\Pi$ and $\Pi'$, respectively.
	Since $|C_j|<|A|,|B|$ and $|C_j| < |C_i|$, we conclude that $S < S'$.
	Hence, we have that $\Pi < \Pi'$.
	Furthermore, $\Pi'$ can be obtained in time $\mathcal{O}(|V|)$.
\end{proof}

\Cref{not sat implies not lex} implies the following corollary.

\begin{corollary}
	\label{unif partition is k comm}
	Let $T=(V,E)$ be a tree such that $|V|\ge k$, for some integer $k\ge 2$.
	Let $\Pi=\{C_1,\ldots,C_k\}$ be a connected uniform $k$-partition of~$T$.
	Then, $\Pi$ is a connected generalized $k$-community structure of~$T$.
\end{corollary}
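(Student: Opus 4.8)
The plan is to derive \Cref{unif partition is k comm} directly from \Cref{not sat implies not lex} by a short contrapositive argument. A connected uniform $k$-partition $\Pi$ is, by definition, one whose size tuple is lexicographically largest among \emph{all} connected $k$-partitions of $T$. So the key observation is that no connected $k$-partition can be strictly larger (in the $<$ order on size tuples) than $\Pi$; that is, there is no connected $k$-partition $\Pi'$ with $\Pi < \Pi'$.

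First I would argue that such a uniform partition actually exists and that the statement is non-vacuous: since $|V| \ge k$, the set of connected $k$-partitions of $T$ is nonempty (one can always split $T$ into $k$ connected pieces, e.g.\ by repeatedly detaching leaves or single-vertex subtrees), and this set is finite, so a lexicographically largest size tuple is attained. This guarantees that a connected uniform $k$-partition exists, though strictly speaking the corollary only asserts a property of any given uniform partition.

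The main step is the contrapositive. Suppose, for contradiction, that $\Pi$ is connected and uniform but is \emph{not} a generalized $k$-community structure. Then by definition there exists a vertex $v \in V$ that is not satisfied with respect to $\Pi$ (note that the size-one communities are permitted in the generalized setting, so the only way $\Pi$ fails to be a generalized $k$-community structure is through an unsatisfied vertex). Applying \Cref{not sat implies not lex} to $T$, $\Pi$, and this vertex $v$ yields a connected $k$-partition $\Pi'$ of $T$ with $\Pi < \Pi'$. This directly contradicts the uniformity of $\Pi$, since $\Pi'$ would have a strictly lexicographically larger size tuple. Hence every vertex must be satisfied with respect to $\Pi$, and $\Pi$ is a connected generalized $k$-community structure.

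The reasoning is essentially immediate once \Cref{not sat implies not lex} is in hand, so I do not anticipate a genuine obstacle; the only point requiring a little care is checking that ``not a generalized $k$-community structure'' is equivalent to ``some vertex is unsatisfied,'' rather than a failure of the size constraint $|C_i| \ge 1$. Since $\Pi$ is a connected $k$-partition of a graph with $|V| \ge k$, each community is nonempty by construction, so the size condition of the generalized definition holds automatically and the unsatisfied-vertex condition is indeed the only obstruction. With that, the corollary follows.
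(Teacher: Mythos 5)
Your proof is correct and follows essentially the same route as the paper: assume $\Pi$ is uniform but not a generalized $k$-community structure, extract an unsatisfied vertex, and apply \Cref{not sat implies not lex} to obtain a lexicographically larger connected $k$-partition, contradicting uniformity. The additional remarks on non-vacuousness and on the size constraint being automatic are fine but not needed beyond what the paper's own two-line argument contains.
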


\begin{proof}
	Assume by contradiction that $\Pi$ does not form a generalized $k$-community structure of~$T$.
	Thus, there exists some $v\in V$ that does not satisfy \eqref{prop:k-comm1} with respect to~$\Pi$.
	Then, \Cref{not sat implies not lex} implies that there exists a $k$-partition $\Pi'$ of $T$ such that $\Pi<\Pi'$.
	A contradiction to the assumption that $\Pi$ is a connected uniform $k$-partition of~$T$.
\end{proof}\newpage

Let us call the transformation from a $k$-partition $\Pi$ to a $k$-partition $\Pi'$ of $T$, such that $\Pi < \Pi'$, as described at the end of the proof of \Cref{not sat implies not lex} an \textsc{Improvement Procedure}.
Then, we define an \textsc{Improvement Algorithm} as follows.
Assume, that we are given a positive integer $k$, a tree $T=(V,E)$ such that $|V|\ge k$, and a connected $k$-partition $\Pi$ of~$T$.
While there exists $v\in V$ that is not satisfied with respect to $\Pi$, we apply the \textsc{Improvement Procedure}.
In this way, we obtain a connected generalized $k$-community structure of~$T$.
The algorithm terminates, since at each iteration of the while loop, the connected $k$-partition of $T$ gets updated and its size-tuple increases lexicographically.
Next, we show that the algorithm runs in time $\mathcal{O} (k^2\cdot|V|^2)$ by presenting apolynomial  upper bound on the number of iterations of the \textsc{Improvement Algorithm}.

To this end, let us introduce several notions.
For two positive integers $k,n\ge 2$, a \emph{$(k,n)$-tuple} $t$ is an ordered tuple of $k$ non-negative integers such that $\sum_{i=1}^k t[i] = n$.
Notice that, given a connected $k$-partition $\Pi = \{C_1,\ldots,C_k\}$ of a tree $T =(V,E)$ on $n$ vertices, the size tuple of $\Pi$ is a $(k,n)$-tuple.
Let $t$ be a $(k,n)$-tuple, $x, y \in \intInterval{k}$ such that $x < y$ and $ t[x] + 2 \le t[y] $, and $\delta\in \intInterval{t[y] - t[x] -1}$.
An \emph{$(x,y)$-change of a $(k,n)$-tuple $t$ of order $\delta$} is a $(k,n)$-tuple whose elements correspond exactly to the elements of the multiset $\{t[i] : i \in \intInterval{k} \setminus \{x,y\} \} \cup \{t[x] + \delta, t[y] - \delta\}$.
We say that a $(k,n)$-tuple $t'$ is a \emph{$2$-change of $t$ of order $\delta$} if $t'$ is an $(x,y)$-change of $t$ of order $\delta$, for some $x<y \in \intInterval{k}$ (see \Cref{fig:tuples} for an illustration).
If $\delta$ is clear from the context, we may simply say a $2$-change of~$t$.
Since $t$ is a $(k,n)$-tuple, then $2$-change of $t$ is a $(k,n)$-tuple as well.

Observe that if $t$ and $t'$ are $(k,n)$-tuples such that $t<t'$ and $t$ and $t'$ share all but two of their elements (disregarding their positions in the tuples), then $t'$ is a $2$-change of~$t$.
For completeness, we state this formally.

\begin{lemma}
	\label{2change}
	Let $t $ and $t' $ be two $(k,n)$-tuples that differ in exactly two elements and such that $t<t'$.
	Then $t'$ is a $2$-change of~$t$.
\end{lemma}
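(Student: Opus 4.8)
The plan is to reduce everything to the two elements in which $t$ and $t'$ differ. Writing the shared part as a sub-multiset $M$ of size $k-2$, I would express the two tuples (as multisets) in the form $t = M \uplus \{a,b\}$ and $t' = M \uplus \{c,d\}$ with $\{a,b\}\cap\{c,d\}=\emptyset$, and, after relabelling, $a\le b$ and $c\le d$. Since $t$ and $t'$ are $(k,n)$-tuples, $\sum_i t[i]=\sum_i t'[i]=n$, so cancelling the common part $M$ gives the single identity $a+b=c+d$. This equation, together with disjointness, drives the whole argument.

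Next I would record the resulting dichotomy. The minima $a$ and $c$ cannot coincide (equal minima together with $a+b=c+d$ would force $b=d$, hence $\{a,b\}=\{c,d\}$), so either $a<c$ or $c<a$. If $a<c$ then $b=(a+b)-a>(c+d)-c=d$, giving $a<c\le d<b$; symmetrically, $c<a$ gives $c<a\le b<d$. In the first case the distinguished pair of $t'$ is strictly \emph{nested} inside that of $t$ (it is more balanced), and in the second it is strictly \emph{wider}.

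The hypothesis $t<t'$ is exactly what selects the nested case, and this is the step I expect to be the main obstacle, since it requires relating the lexicographic order of the \emph{fully sorted} tuples to the balance of the distinguished pair in the presence of the interspersed common part $M$. (All comparisons are read on the non-decreasing sorted representatives, as for size tuples.) I would rule out the wider case by showing it forces $t'<t$. In the wider case $c<a\le b<d$ the value $c$ lies strictly below every value of the symmetric difference: every value smaller than $c$ occurs only inside $M$, hence equally often in $t$ and $t'$, while $c$ occurs exactly once more in $t'$ (as $c\notin\{a,b\}$ and $c\ne d$). Scanning the two sorted tuples from the left, they agree until $t$ exhausts its copies of $c$ while $t'$ still carries one; at that first position of disagreement $t'$ holds the smaller value, so $t'<t$, contradicting the hypothesis. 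Thus we are in the nested case $a<c\le d<b$.

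It then remains to produce the $2$-change explicitly, which is routine. From $a<c\le d<b$ we get $b\ge a+2$, so picking positions $x<y$ of the sorted tuple $t$ with $t[x]=a$ and $t[y]=b$ meets the requirement $t[x]+2\le t[y]$. Setting $\delta:=c-a$, disjointness gives $\delta\ge 1$ and $c<b$ gives $\delta\le b-a-1$, so $\delta\in\intInterval{t[y]-t[x]-1}$. The $(x,y)$-change of order $\delta$ replaces $\{a,b\}$ by $\{a+\delta,\,b-\delta\}=\{c,\,(a+b)-c\}=\{c,d\}$ and leaves $M$ untouched, so its multiset equals that of $t'$; hence $t'$ is a $2$-change of $t$, as claimed.
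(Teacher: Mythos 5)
Your proof is correct, and it takes a genuinely different route from the paper's. The paper argues position-wise on the sorted tuples: it takes the first index $j$ where $t$ and $t'$ disagree (which exists since $t<t'$), sets $\delta = t'[j]-t[j]\ge 1$, and then invokes the equal-sum condition to assert that the unique other differing position $j'$ satisfies $t'[j'] = t[j'] - \delta$, after which sortedness gives $j<j'$ and $t[j]+2\le t[j']$. You instead work multiset-wise: you split off the common sub-multiset $M$, reduce everything to the identity $a+b=c+d$ with $\{a,b\}\cap\{c,d\}=\emptyset$, derive the nesting dichotomy ($a<c\le d<b$ or $c<a\le b<d$), exclude the wider case by the scanning argument (the strict minimum of the symmetric difference occurs once more in the tuple containing it, forcing that tuple to be lexicographically smaller), and then exhibit the change explicitly with $\delta=c-a$. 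Your route costs more case analysis, but it buys something real: it proves the lemma under the reading that the paper itself needs, namely ``differ in exactly two elements \emph{as multisets}, disregarding positions'' (this is how the preceding observation is phrased, and it is what the application to size tuples of the \textsc{Improvement Procedure} requires, since swapping two community sizes can shift many positions of the sorted tuple). Under that reading the paper's key step can fail: for $t=(1,2,10)$ and $t'=(2,5,6)$ the tuples differ in two elements but three positions; the first disagreement gives $\delta=1$, yet no position satisfies $t'[j']=t[j']-1$, so the paper's argument stalls, whereas your construction correctly returns $\delta=c-a=4$ with $\{1+\delta,10-\delta\}=\{5,6\}$. In short, the paper's proof is a three-line computation valid when the sorted tuples differ in exactly two \emph{positions}; yours is longer but covers the general multiset case and thereby closes a gap the paper's proof silently assumes away.
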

\begin{proof}
	By definition, $t<t'$ implies that there exists $j\in \intInterval{k}$ such that $t[j] < t'[j]$ and $t[i] = t'[i]$, for all $1\le i < j$.
	Then $t'[j] = t[j]+ \delta$, for some $\delta \ge 1$.
	Furthermore, by assumption $t$ and $t'$ differ in exactly two elements and since $\displaystyle \sum_{i=1}^k t[i] = \sum_{i=1}^k t'[i] = n$, thus there must exist some $j' \in \intInterval{k}$ such that $t'[j'] = t[j'] - \delta$.
	Moreover, since $t[i] = t'[i]$, for all $1\le i < j$, we have $j < j'$ and $t[j'] = t'[j'] + \delta \ge t'[j] + \delta = t[j]+ \delta + \delta = t[j]+ 2\delta \ge t[j]+ 2 $.
	Hence, the elements of $t'$ exactly correspond to the elements of the multiset $\{t[i] : i \in \intInterval{k} \setminus \{j,j'\} \} \cup \{t[j] + \delta, t[j'] - \delta\}$, where $j, j' \in \intInterval{k}$ are such that $j < j'$ and $t[j]+ 2 \le t[j']$.
	Thus, $t'$ is a $2$-change of $t$ of order $\delta$.
\end{proof}

The \textsc{Improvement Procedure} modifies a connected $k$-partition $\Pi = \{C_1,\ldots,C_k\}$ of $T$ in the following way.
If there exists a vertex $v \in V$ in some community $C_i$, for some $i\in\intInterval{k}$, which does not satisfy \cref{prop:k-comm2} (resp.\ \cref{prop:k-comm1}) for some community $C_j \in \Pi$ (which is possible only if $ \vert (\vert C_i \vert - \vert C_j \vert )\vert \ge 2$), $i\neq j\in \intInterval{k}$, we transform $\Pi$ into another connected $k$-partition $\Pi'$ of $T$ such that $\Pi<\Pi'$, by  moving some vertices from community $C_i$ to community~$C_j$. Notice that this transformation modifies the sizes of exactly two communities, namely $C_i$ and~$C_j$. Let $S$ and $S'$ be the size tuples of the $k$-partitions $\Pi$ and $\Pi'$, respectively. By \Cref{2change}, we know that $S'$ is a $2$-change of~$S$. Since we do this transformation in \Cref{not sat implies not lex} as long as there exists a vertex which is not satisfied, this actually corresponds to a sequence of $2$-changes.

Next, we present an upper bound on a sequence of $2$-changes which implies in an upper bound on the number of iterations of our \textsc{Improvement Algorithm}.
Let $t$ be a $(k,n)$-tuple and let $t'$ be a $2$-change of $t$ of order~$\delta$.
We may assume without loss of generality, that $\delta \leq (t[y] - t[x])/2$.
Indeed, assume that $\delta > (t[y] - t[x])/2$.
Let $\delta' = t[y] - t[x] - \delta$.
Since $(t[y]-t[x])/2 < \delta \leq t[y] - t[x] - 1 $, we have $\delta ' = t[y] - t[x] - \delta < (t[y]-t[x])/2$ and $1 \le \delta' $.
We also have $t[x] + \delta' = t[y] - \delta$, $t[y] - \delta' = t[x] + \delta$.
Hence, if $\delta > (t[y] - t[x])/2$ and $t'$ is a $2$-change of $t$ of order $\delta$, we can define $\delta' < (t[y]-t[x])/2$ as described above such that $t'$ is a $2$-change of $t$ of order $\delta'$.

Let $w_j(t) = \sum_{i=1}^{j} t[i]$, i.e.\ the sum of the first $j$ elements of a  $(k,n)$-tuple~$t$. We show that if $t'$ is a $2$-change of $t$ of order $\delta$, then $w_j(t) \leq w_j(t')$, for all $j \in \intInterval{k}$.

\begin{lemma}\label{lemma: w_j (t') >= w_j(t)}
	Let $t$ be a $(k,n)$- tuple and let $t'$ be a $2$-change of $t$ of order~$\delta$.
	Then $w_j(t) \leq w_j(t')$, for all $j \in \intInterval{k}$.
\end{lemma}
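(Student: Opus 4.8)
The plan is to exploit the fact that, because every $(k,n)$-tuple is sorted non-decreasingly, the prefix sum $w_j(t)=\sum_{i=1}^j t[i]$ is exactly the sum of the $j$ smallest elements of the multiset $M_t:=\{t[1],\dots,t[k]\}$; equivalently, $w_j(t)$ is the minimum of $\sum_{m\in S}m$ over all sub-multisets $S\subseteq M_t$ with $|S|=j$, a quantity that does not depend on how the elements are arranged. This is the crucial observation, because the naive attempt --- adding $\delta$ at position $x$, subtracting $\delta$ at position $y$, and reading off prefix sums in place --- fails: re-sorting the modified tuple into $t'$ can only \emph{decrease} prefix sums of the smallest elements, which is the wrong direction. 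Working with the minimum-over-$j$-subsets description sidesteps the re-sorting issue entirely.

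First I would fix notation. Let $a:=t[x]$, $b:=t[y]$, $a':=t[x]+\delta$ and $b':=t[y]-\delta$, so that $M_{t'}=(M_t\setminus\{a,b\})\cup\{a',b'\}$ and $a'+b'=a+b$. Using the standing assumption $\delta\le (t[y]-t[x])/2$, we get $a\le a'\le b'\le b$. Let $R:=M_t\setminus\{a,b\}=M_{t'}\setminus\{a',b'\}$ be the common part, and for each feasible $\ell$ let $\rho_\ell$ denote the sum of the $\ell$ smallest elements of $R$, with the convention that a term is simply omitted from a minimum whenever the corresponding choice is infeasible (i.e.\ $\ell<0$ or $\ell>k-2$).

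Next, I would split a minimum-weight $j$-subset according to how many of the two special elements it uses. Since $a\le b$, including the smaller one is never worse, so the ``$b$ only'' option is redundant and
\[
w_j(t)=\min\{\rho_j,\ a+\rho_{j-1},\ a+b+\rho_{j-2}\},
\]
and symmetrically, since $a'\le b'$,
\[
w_j(t')=\min\{\rho_j,\ a'+\rho_{j-1},\ a'+b'+\rho_{j-2}\}.
\]
Comparing the three terms one by one is then immediate: the first terms coincide; $a'\ge a$ gives $a'+\rho_{j-1}\ge a+\rho_{j-1}$; and $a'+b'=a+b$ gives equality of the last terms. Hence each term defining $w_j(t')$ dominates the corresponding term defining $w_j(t)$, so that $w_j(t')\ge w_j(t)$ for every $j\in\intInterval{k}$, as required.

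The only genuine obstacle is the re-sorting step, which I neutralise at the outset by passing to the order-independent ``sum of the $j$ smallest elements'' formulation; after that the argument is a routine three-way case split, and the boundary cases ($j\in\{1,2,k-1,k\}$) are handled uniformly by the convention that infeasible terms drop out of each minimum. (Alternatively, the same conclusion can be reached by an exchange argument: take an optimal $j$-subset of $M_{t'}$ and replace $a'$ by $a$ and/or $b'$ by $b$ to obtain a $j$-subset of $M_t$ of no larger weight, which again yields $w_j(t)\le w_j(t')$.)
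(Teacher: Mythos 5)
Your proof is correct, but it takes a genuinely different route from the paper's. The paper argues positionally: it locates the indices $x'$ and $y'$ at which the modified values $t[x]+\delta$ and $t[y]-\delta$ land in the re-sorted tuple $t'$, and then proves $w_j(t)\le w_j(t')$ by a five-case analysis on $j$ (the ranges $j\le x-1$, $x\le j\le x'-1$, $x'\le j\le y'-1$, $y'\le j\le y-1$, $y\le j\le k$), computing the difference $w_j(t')-w_j(t)$ in each range (it is $0$, nonnegative, exactly $\delta$, nonnegative, and $0$, respectively). You sidestep this re-sorting bookkeeping entirely by observing that, for a sorted tuple, $w_j$ equals the minimum weight of a $j$-element sub-multiset, and then comparing the two minima termwise after splitting according to which of the two modified elements an optimal subset uses; the facts $a\le a'$, $a'\le b'$ and $a+b=a'+b'$ (the first and second relying on the same ``$\delta\le (t[y]-t[x])/2$'' normalization that the paper also establishes) make the comparison immediate. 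Your argument is shorter and avoids the index manipulation where errors typically hide, at the cost of losing the exact increment information that the paper's case analysis yields (information which the paper never uses later anyway). One small blemish: in your parenthetical exchange argument, replacing $b'$ by $b$ when the optimal subset of $M_{t'}$ contains $b'$ but not $a'$ would \emph{increase} the weight, since $b\ge b'$; the correct exchange in that case is to replace $b'$ by $a$, which is legal since $a\le b'$. This does not affect your main argument, which absorbs that case through the redundancy of the ``$b$ only'' term.
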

\begin{proof}
	Since $t'$ is a $2$-change of $t$ of order $\delta$, we know by definition that there exist $x<y \in \intInterval{k}$ such that $t'=\{t[i] : i \in \intInterval{k} \setminus \{x,y\} \} \cup \{t[x] + \delta, t[y] - \delta\}$.
	Let $x'$ be the smallest index such that $t'[x'] =t[x] + \delta$ and $y'$ be the largest index such that $t'[y'] = t[y] - \delta$.
	Recall that we may assume without loss of generality that $\delta \leq (t[y] - t[x])/2$, or equivalently $t[x] + \delta \leq t[y] - \delta$, and thus, since $t'$ is ordered, we have $x' < y'$.

	We want to show that $w_j(t) \leq w_j(t')$, for all $j \in \intInterval{k}$.
	We distinguish five cases depending on the index~$j$ (see \Cref{fig:tuples}).
	\begin{figure}[!ht]
		\centering
		\includegraphics[width=0.80\linewidth]{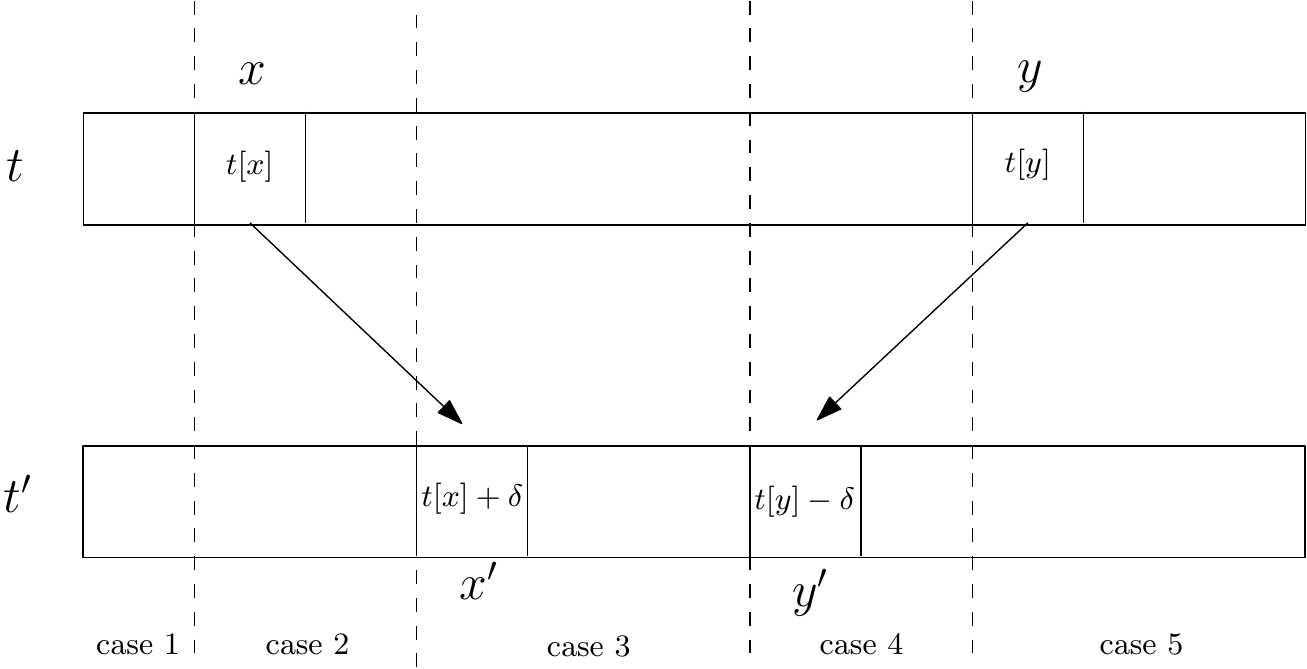}
		\caption{A $(k,n)$-tuple $t$ and an $(x,y)$-change $t'$ of $t$ of order $\delta$ and a graphical representation of the five cases considered in the proof of \Cref{lemma: w_j (t') >= w_j(t)}.}\label{fig:tuples}
	\end{figure}
	\setlist[description]{font=\normalfont}
	\begin{description}
		\item[Case 1.] \label{case 1} $1 \le j \le x - 1$.
		      Since $ t[j] = t'[j]$, we have $w_j(t) = w_j(t')$.
		\item[Case 2.] \label{case 2} $x \leq j \le x'-1$.
		      In this case, we have
		      $$w_j(t') = w_j(t) - t[x] + t[j+1] \ge w_j(t)\,, $$
		      since $ t[j+1] \ge t[x] $.
		\item[Case 3.] \label{case 3} $x' \le j \le y'-1$.
		      We have
		      $$w_{j}(t') =w_j(t) - t[x] + t'[x'] =  w_j(t) - t[x] + t[x] + \delta = w_{j}(t) + \delta\,.$$
		      Since $\delta \ge 1$ we obtain, $w_j(t) \leq w_j(t')$.
		\item[Case 4.] \label{case 4} $y' \le j \le y-1$.
		      In this case, we have
		      \begin{align*}
			       & w_j(t') = w_{y'-1} (t')+ t'[y'] +\sum_{i=y'+1}^{j} t'[i]                          \\
			       & =w_{y'-1} (t) + \delta + t'[y'] + \sum_{i=y'+1}^{j} t'[i] =
			      w_{y'-1} (t) + \delta + t[y] - \delta + \sum_{i=y'}^{j-1} t[i]                       \\
			       & = w_{y'-1} (t) + t[y] + \sum_{i=y'}^{j-1} t[i] \ge w_{j-1}(t) + t[j] = w_j(t) \,,
		      \end{align*}
		      where $w_{y'-1} (t') = w_{y'-1} (t) + \delta$ follows from the previous case and $ t[j] \le t[y]$ since $ j<y$ and $t'$ is ordered.
		\item[Case 5.] \label{case 5} $y \le j \le k$.
		      In this case, we have
		      \begin{align*}
			      w_j(t') = w_{j} (t) - t[x] - t[y] + (t[x] + \delta) +(t[y] - \delta) = w_j(t)\,.
		      \end{align*}
	\end{description}
	Hence, for all $j\in\intInterval{k}$, we have $w_j(t) \leq w_j(t')$, as claimed.
\end{proof}

In the next lemma, we derive an upper bound on $w_j(t)$, for every $j \in \intInterval{k}$.

\begin{lemma}\label{w_j is not too large}
	Let $k$ and $n$ be two positive integers.
	Let $t$ be a $(k,n)$-tuple.
	Then $w_j(t) \leq j \cdot \frac{n}{k}$, for all $j \in\intInterval{k}$.
\end{lemma}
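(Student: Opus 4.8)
The plan is to use the defining property that a $(k,n)$-tuple is sorted in non-decreasing order, so that $w_j(t)$ is precisely the sum of the $j$ smallest among the $k$ entries that sum to $n$; the statement then amounts to the elementary fact that the average of the $j$ smallest values never exceeds the overall average $n/k$. I would fix an arbitrary $j \in \intInterval{k}$ and, when $j < k$, split the entries into a first block $t[1],\dots,t[j]$ and a second block $t[j+1],\dots,t[k]$.

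The core step is a comparison driven entirely by the ordering of $t$: every entry in the first block satisfies $t[i] \le t[j]$, while every entry in the second block satisfies $t[\ell] \ge t[j+1] \ge t[j]$. To avoid any division, I would work directly with the equivalent inequality $(k-j)\sum_{i=1}^{j} t[i] \le j\sum_{i=j+1}^{k} t[i]$, obtained from $w_j(t) \le j\cdot \frac{n}{k}$ by multiplying out $k\,w_j(t) \le j\,n = j\sum_{i=1}^k t[i]$ and cancelling the common term $j\,w_j(t)$. The left-hand side is bounded above by $(k-j)\,j\,t[j]$ and the right-hand side is bounded below by $j\,(k-j)\,t[j+1] \ge j\,(k-j)\,t[j]$, so the inequality holds at once. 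Reading this chain backwards yields $w_j(t) \le j\cdot\frac{n}{k}$.

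For the remaining case $j = k$ there is no second block, and the claim is immediate since $w_k(t) = \sum_{i=1}^k t[i] = n = k\cdot\frac{n}{k}$. I do not expect any genuine obstacle: the argument is a routine sorted-average estimate, and the only point requiring a little care is the degenerate value $j = k$ (where a naive comparison of block averages would divide by $k-j = 0$), which is exactly why I would phrase the main case through the cross-multiplied inequality rather than through explicit averages.
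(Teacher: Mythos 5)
Your proof is correct. The paper establishes the same inequality by contradiction: assuming $w_j(t) > j\cdot\frac{n}{k}$ for some $j<k$, it uses an averaging (pigeonhole) argument to extract a witness $t[x] > \frac{n}{k}$ with $x \in \intInterval{j}$ and a witness $t[y] < \frac{n}{k}$ with $y > j$, which together contradict the non-decreasing ordering of the tuple. You instead argue directly: after cross-multiplying to the equivalent integer inequality $(k-j)\sum_{i=1}^{j}t[i] \le j\sum_{i=j+1}^{k}t[i]$, you bound the first block sum above by $(k-j)\,j\,t[j]$ and the second block sum below by $j\,(k-j)\,t[j+1]$, linking the two via the single boundary comparison $t[j]\le t[j+1]$. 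Both arguments rest on exactly the same structural fact (sortedness of the tuple split at index $j$), so the difference is one of presentation rather than of substance: your version is direct, avoids fractions entirely by staying in integer arithmetic, and isolates the only comparison that matters at the block boundary, whereas the paper's contradiction-plus-witness phrasing is marginally shorter. Your separate treatment of $j=k$ (where $w_k(t)=n=k\cdot\frac{n}{k}$) mirrors the paper, which also disposes of that case up front.
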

\begin{proof}
	Assume by contradiction that there exists $j \in \intInterval{k}$ such that $w_j(t) > j \cdot \frac{n}{k}$.
	If $j = k$, then $t$ would not be a $(k,n)$-tuple.
	So we may assume that $j < k$.
	Since $w_j(t) = \sum_{i=1}^j t[i] > j \cdot \frac{n}{k}$, there must exist $x \in \intInterval{j}$ such that $t[x] > \frac{n}{k}$.
	We also have that $\sum_{i=j+1}^k t[i] = n - w_j(t) < n - j \cdot \frac{n}{k} = (k-j) \cdot \frac{n}{k}$, and thus, there exists $y \ \in \intInterval{k} \setminus \intInterval{j}$ such that $t[y] < \frac{n}{k}$.
	However, since $x < y$ and $t[x] > t[y]$, this contradicts the fact that $t$ is an ordered tuple.
\end{proof}

\begin{lemma}
	\label{kn-tuples_bounded}
	Let $k$ and $n$ be two positive integers.
	Let $\mathcal{S} = \langle t_1, \dots, t_\ell \rangle$ be a sequence of ordered $(k,n)$-tuples such that, for every two consecutive tuples $t_{i-1}$ and $t_i$ in $\mathcal{S}$, it holds that $t_{i}$ is a $2$-change of $t_{i-1}$.
	Then $\ell \leq 1 + \frac{k+1}{2} \cdot n$.
\end{lemma}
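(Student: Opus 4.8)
The plan is to run a potential-function argument. I would define
\[
\Phi(t) := \sum_{j=1}^{k} w_j(t),
\]
the sum over all prefixes of the running totals $w_j(t)=\sum_{i=1}^j t[i]$ already used in \Cref{lemma: w_j (t') >= w_j(t)}. Since every $t_i$ consists of integers, $\Phi$ is integer-valued, and the whole proof reduces to showing that $\Phi$ strictly increases along the sequence $\mathcal{S}$ while remaining bounded above by $\tfrac{k+1}{2}\,n$ and below by $0$.

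First I would establish strict monotonicity. For two consecutive tuples, $t_i$ is a $2$-change of $t_{i-1}$ of some order $\delta\ge 1$, and \Cref{lemma: w_j (t') >= w_j(t)} already gives $w_j(t_{i-1})\le w_j(t_i)$ for every $j\in\intInterval{k}$, hence $\Phi(t_{i-1})\le\Phi(t_i)$. To upgrade this to a genuine gain I would reuse Case~3 of the proof of \Cref{lemma: w_j (t') >= w_j(t)}: after the without-loss-of-generality normalization $\delta\le (t[y]-t[x])/2$ one has $x'<y'$, so the index $j=x'$ lies in the range $[x',y'-1]$ and satisfies $w_{x'}(t_i)=w_{x'}(t_{i-1})+\delta$. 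Consequently $\Phi(t_i)\ge\Phi(t_{i-1})+\delta\ge\Phi(t_{i-1})+1$, so $\Phi$ increases by at least one unit at each of the $\ell-1$ steps.

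Next I would bound $\Phi$. Applying \Cref{w_j is not too large}, namely $w_j(t)\le j\cdot\frac{n}{k}$, to each term gives
\[
\Phi(t)=\sum_{j=1}^{k} w_j(t)\;\le\;\frac{n}{k}\sum_{j=1}^{k} j\;=\;\frac{n}{k}\cdot\frac{k(k+1)}{2}\;=\;\frac{k+1}{2}\,n,
\]
while $t[i]\ge 0$ for all $i$ yields $\Phi(t)\ge 0$. Combining the strict increase with these bounds, $\ell-1\le \Phi(t_\ell)-\Phi(t_1)\le \frac{k+1}{2}\,n-0$, which is exactly $\ell\le 1+\frac{k+1}{2}\,n$.

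Since the technical content is already packaged into \Cref{lemma: w_j (t') >= w_j(t)} and \Cref{w_j is not too large}, the remaining difficulty is essentially conceptual rather than computational: the main obstacle is selecting the summed-prefix potential $\Phi$ (rather than, say, the lexicographic rank, whose polynomial boundedness is not transparent) and verifying that each step gains at least a full unit instead of being merely nondecreasing. Pinning down that strict gain through Case~3 of the earlier lemma is the one point I would treat with care; everything else is arithmetic.
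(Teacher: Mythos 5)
Your proof is correct, and it takes a genuinely different route from the paper's. The paper proves the bound by a charging argument with induction on $\ell$: each step $t_{i-1}\rightarrow t_i$ is assigned to the set $S_x$ indexed by the largest $x$ such that $t_i$ is an $(x,y)$-change of $t_{i-1}$ for some $y>x$, and the induction shows $|S_j|\le w_j(t_\ell)$ for every $j\in\intInterval{k}$, whence $\ell\le 1+\sum_{j=1}^k w_j(t_\ell)\le 1+\frac{k+1}{2}\cdot n$ by \Cref{w_j is not too large}. You reach the same intermediate quantity $\sum_{j=1}^k w_j(t_\ell)$ through the potential $\Phi(t)=\sum_{j=1}^k w_j(t)$: it is non-decreasing by \Cref{lemma: w_j (t') >= w_j(t)}, it gains at least $\delta\ge 1$ per step because Case~3 of that lemma's proof gives $w_{x'}(t_i)=w_{x'}(t_{i-1})+\delta$ and the range $x'\le j\le y'-1$ is non-empty (as $x'<y'$), and it is sandwiched between $0$ and $\frac{k+1}{2}\cdot n$ by non-negativity of the entries and \Cref{w_j is not too large}. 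All the facts you invoke are established in the paper and your arithmetic is right, so the argument stands. As for what each approach buys: yours dispenses with the induction over the sequence and with the maximality tie-breaking in the definition of the sets $S_j$, so it is shorter and more transparent; the paper's charging argument, at the cost of that bookkeeping, yields the finer per-index information $|S_j|\le w_j(t_\ell)$. The only presentational caveat in your version is that the strict gain is a fact proved \emph{inside} \Cref{lemma: w_j (t') >= w_j(t)} (its Case~3) but not asserted in its statement, so a polished write-up should either cite that computation explicitly, as you do, or reprove it in a line.
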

\begin{proof}
	For all $j \in \intInterval{k}$, we define a set $S_j$ of $(k,n)$-tuples $t_i \in \mathcal{S}$, such that $i > 1$ and $t_{i}$ is an $(j,j')$-change of $t_{i-1}$, for $j<j' \in \intInterval{k}$, and $j$ is maximum with this property.
	By the choice of $j$, we have that every tuple in $\mathcal{S} \setminus \{t_1\}$ belongs to exactly one set $S_j$, for some $j \in \intInterval{k}$.
	We prove by induction on the length $\ell$ of $\mathcal{S}$ that $|S_j| \leq w_j(t_\ell)$, for every $j \in \intInterval{k}$.
	If $\mathcal{S}$ contains at most one tuple, then $|S_j| = 0$, for all $j \in  \intInterval{k}$, and the claim holds.
	Suppose now that the claim remains true whenever the length of $\mathcal{S}$ is less than $\ell$, for some $\ell \in \mathbb{N}$.

	We define $\mathcal{S}' = \mathcal{S} \setminus \{t_\ell\}$ and, similarly as above, the set $S'_j$ for all $j \in \intInterval{k}$, is the set of all $(k,n)$-tuples $ t_i \in \mathcal{S'}$, such that $i > 1$ and $t_{i}$ is an $(j,j')$-change of $t_{i-1}$, for $j<j' \in \intInterval{k}$,  and $j$ is maximum with this property.
	Let $x<y \in \intInterval{k}$ and $\delta \in \intInterval{t[y] - t[x]-1}$ such that $t_\ell$ is an $(x,y)$-change of $t_{\ell-1}$ of order~$\delta$.
	Without loss of generality, we may assume $x$ to be maximum with this property.
	By the choice of $x$, we have $t_\ell \in S_x$.
	Recall that we may assume, without loss of generality, that $\delta \leq (t_{\ell-1}[y] - t_{\ell-1}[x])/2$.
	Also notice that for all $j \in \intInterval{k} \setminus \{x\}$, it holds that $S_{j} = S'_{j} $, and hence the induction hypothesis and \Cref{lemma: w_j (t') >= w_j(t)} imply that $|S_{j}| = |S'_j| \leq w_j(t_{\ell-1}) \leq w_j(t_\ell)$.
	By the choice of $x$, we have that $t_{\ell-1}[x] < t_{\ell-1}[x+1]$.
	In particular, we can deduce that $t_\ell[x] = \min \{t_{\ell-1}[x+1], t_{\ell-1}[x] + \delta\} > t_{\ell-1} [x]$.
	We also note that $t_\ell[i] = t_{\ell-1}[i]$, for all $x>i \in \intInterval{k}$.
	Consequently, $|S_x| = |S'_x| + 1 \leq w_x(t_{\ell-1}) +1= w_{x-1}(t_{\ell-1}) + t_{\ell-1}[x] +1 \le w_{x-1}(t_{\ell-1}) + t_{\ell}[x] = w_x(t_\ell)$, which proves the induction hypothesis.

	We can now derive an upper bound on the length $\ell$ of $\mathcal{S}$ as follows:
	\begin{align*}
		 & \ell = 1 + \sum_{j=1}^k |S_j|                                                       \\
		 & \leq 1 + \sum_{j=1}^k w_j(t_\ell)  \text{ (see above)}                              \\
		 & \leq 1 + \sum_{j=1}^k j \cdot \frac{n}{k}  \text{ (by \Cref{w_j is not too large})} \\
		 & =  1 + \frac{k+1}{2} \cdot n\,.
	\end{align*}
\end{proof}

This leads to the following result.

\begin{theorem}\label{thrm: gen k-comm}
	Let $k \geq 2$ be an integer and let $T=(V,E)$ be a tree such that $|V|\ge k$.
	Then, there exists a connected generalized $k$-community structure of $T$ that can be computed in $\mathcal{O}(k^2\cdot|V|^2)$ time.
\end{theorem}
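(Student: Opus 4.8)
The plan is to assemble the machinery developed above into the \textsc{Improvement Algorithm} and to bound its running time. Existence of a connected generalized $k$-community structure already follows from \Cref{unif partition is k comm}, provided at least one connected $k$-partition of $T$ exists; what remains is to produce one constructively and to bound the number of improvement steps so that the overall complexity is $\mathcal{O}(k^2\cdot|V|^2)$.

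First I would exhibit a starting connected $k$-partition $\Pi_0$. Since $T$ is a tree with $|V|\ge k$, I would repeatedly peel off leaves: remove a leaf $u_1$ and set $C_1=\{u_1\}$, remove a leaf $u_2$ of the remaining tree and set $C_2=\{u_2\}$, and so on, until $C_1,\dots,C_{k-1}$ are singletons, letting the remaining vertices form $C_k$. Removing a leaf from a tree yields a tree, and before the $i$-th removal the remaining tree has $|V|-(i-1)\ge 2$ vertices for all $i\le k-1$ (this is exactly where $|V|\ge k$ is used), so each step is valid and $C_k\neq\emptyset$. Each $C_i$ with $i<k$ induces a single vertex and $C_k$ induces a subtree, so $\Pi_0$ is a connected $k$-partition that can be computed in $\mathcal{O}(|V|)$ time.

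Next I would run the loop: while some vertex of $T$ is not satisfied with respect to the current connected $k$-partition, apply the \textsc{Improvement Procedure} of \Cref{not sat implies not lex} to obtain a strictly lexicographically larger connected $k$-partition. When the loop halts, every vertex is satisfied, so the final partition is by definition a connected generalized $k$-community structure of $T$; connectivity is preserved at every step, since $\Pi_0$ is connected and \Cref{not sat implies not lex} outputs a connected partition. For termination and the iteration bound I would use the tuple analysis: each improvement step changes the sizes of exactly two communities, so the associated size tuple is replaced by a $2$-change of it (as argued just before \Cref{kn-tuples_bounded} via \Cref{2change}), and the size tuples increase strictly in lexicographic order. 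Hence the sequence of size tuples is a sequence of consecutive $2$-changes, and \Cref{kn-tuples_bounded} bounds its length by $1+\tfrac{k+1}{2}\,|V|=\mathcal{O}(k\,|V|)$, which bounds the number of iterations.

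Finally I would account for the per-iteration cost. Detecting an unsatisfied vertex (or certifying that none exists) is performed by the checking procedure of \Cref{check if kcs linear time} in time $\mathcal{O}(k(|V|+|E|))$; since $T$ is a tree, $|E|=|V|-1$ and this is $\mathcal{O}(k\,|V|)$. One application of the \textsc{Improvement Procedure} costs $\mathcal{O}(|V|)$ by \Cref{not sat implies not lex}. Thus each iteration runs in $\mathcal{O}(k\,|V|)$ time, and multiplying by the $\mathcal{O}(k\,|V|)$ bound on the number of iterations, together with the $\mathcal{O}(|V|)$ startup cost, yields the claimed $\mathcal{O}(k^2\cdot|V|^2)$ running time. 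I expect the only delicate point to be the justification that consecutive size tuples genuinely form a $2$-change sequence so that \Cref{kn-tuples_bounded} applies; the remaining steps are routine bookkeeping built on the already-established lemmas.
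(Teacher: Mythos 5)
Your proposal is correct and follows essentially the same route as the paper's proof: start from a connected $k$-partition computable in linear time (the paper deletes any $k-1$ edges where you peel off $k-1$ leaves, an immaterial difference), run the \textsc{Improvement Algorithm}, bound the number of iterations by $\mathcal{O}(k\cdot|V|)$ via \Cref{2change} and \Cref{kn-tuples_bounded}, and bound each iteration's cost by $\mathcal{O}(k\cdot|V|)$ via \Cref{check if kcs linear time} and \Cref{not sat implies not lex}.
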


\begin{proof}
	Notice that a connected $k$-partition $\Pi$ of $T$ can simply be obtained by deleting any $k-1$ edges.
	We then apply the \textsc{Improvement Algorithm} starting with~$\Pi$.
	If all the vertices of $T$ are satisfied with respect to $\Pi$, we are done.
	Otherwise, we apply the \textit{``while''} loop from the \textsc{Improvement Algorithm} to compute a connected $k$-partition $\Pi'$ of $T$ such that $\Pi<\Pi'$.
	The sequence of size tuples corresponding to the successive partitions obtained that way is a lexicographic sequence of $(k,n)$-tuples.
	We know from \Cref{2change} that for every two consecutive $(k,n)$-tuples $t$ and $t'$ in the sequence, where $t<t'$, it holds that $t'$ is a $2$-change of~$t$.
	By \Cref{kn-tuples_bounded}, the size of such a sequence is at most $1 + \frac{k+1}{2} \cdot n=1 + \frac{k+1}{2} \cdot |V| = \mathcal{O} (k |V|)$, and so is therefore the number of iterations of the \textsc{Improvement Algorithm}.
	Then, by \Cref{check if kcs linear time,not sat implies not lex} each iteration can be carried out in time $\mathcal{O}(k |V|)$ in trees.
	We conclude that the complexity of the \textsc{Improvement Algorithm} is $\mathcal{O}(k^2 \cdot |V|^2)$.
	Hence, the statement follows.
\end{proof}

\begin{figure}[ht]
	\begin{center}
		\includegraphics[width=0.9\linewidth]{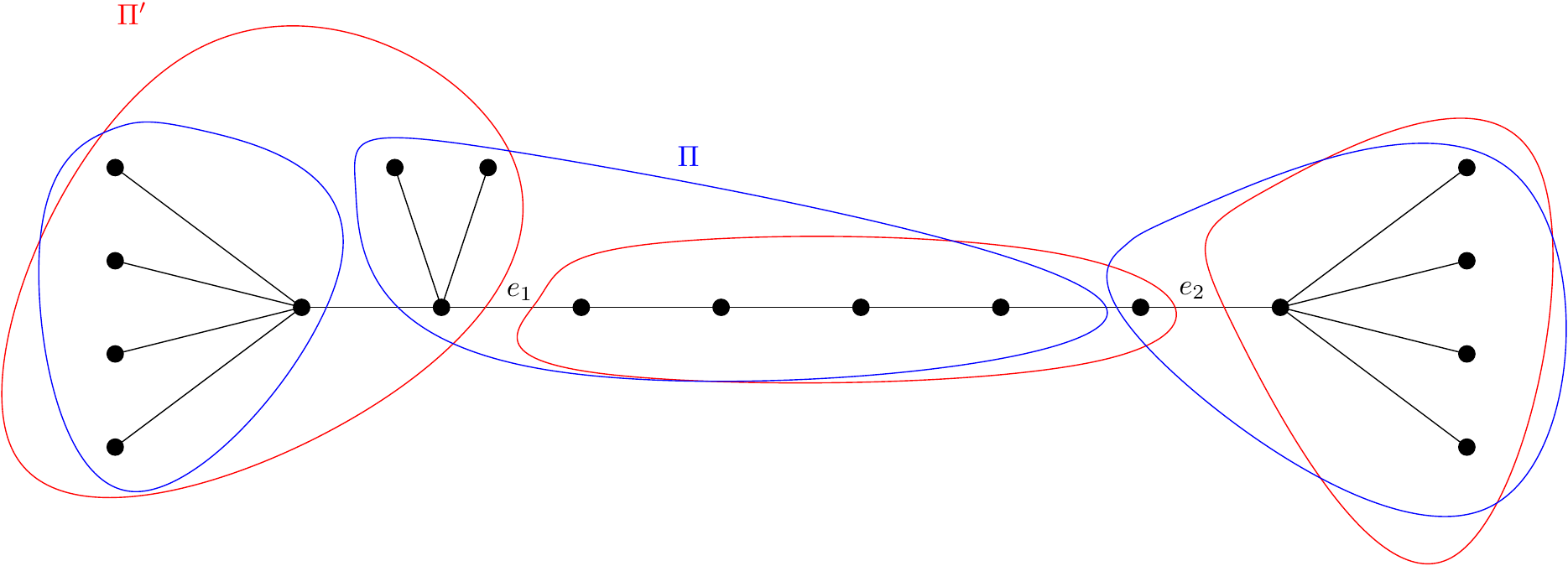}
	\end{center}
	\caption{A tree $T$ and a generalized $3$-community structure $\Pi'$ returned by the \textsc{Improvement Algorithm} represented by red bags and a $3$-community structure $\Pi$ represented by blue bags which is lexicographically larger than $\Pi'$.}
	\label{fig:3-comm which is not uniform}
\end{figure}

Notice that the \textsc{Improvement Algorithm} does not necessarily return a uniform $k$-partition.
To illustrate this, in \Cref{fig:3-comm which is not uniform} we present a tree $T$ and a connected $3$-community structure $\Pi'$, represented by red bags, that can be obtained by the \textsc{Improvement Algorithm} (it suffices to start with $\Pi'$, which the algorithm does not change).
A different connected $3$-community structure $\Pi$, represented by blue bags, is lexicographically larger than $\Pi'$.
This implies that $\Pi'$ is not uniform.
Hence, it would be interesting to consider the following open question.
\begin{quote}
		What is the time complexity of computing a connected uniform $k$-partition of a tree?
\end{quote}

We now give a necessary and sufficient condition for a tree to admit a $k$-community structure for any integer $k\geq 2$, and show how to obtain it, if it exists, in linear time.

\begin{theorem}\label{k-cs not small in trees poly}
	Let $k\geq 2$ be an integer.
	A tree $T$ admits a $k$-community structure if and only if $T$ contains a matching of size~$k$.
	Furthermore, if such a $k$-community structure exists, it can be found in time $\mathcal{O}(k^2\cdot |V|^2)$.
\end{theorem}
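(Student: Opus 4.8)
The plan is to establish the two directions of the equivalence and then address the running time. The necessity direction is immediate: by \Cref{k-matching}, if $T$ admits a $k$-community structure then $T$ (being a tree, hence without isolated vertices once $|V|\ge 2$) contains a matching of size $k$. So the real work lies in the sufficiency direction and in the algorithmic claim.

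For sufficiency, suppose $T$ has a matching $M$ of size $k$. The idea is to leverage the machinery already developed for generalized $k$-community structures, and then \emph{upgrade} a generalized structure into a genuine one by ruling out communities of size one. Recall from \Cref{thrm: gen k-comm} that $T$ admits a connected generalized $k$-community structure computable in $\mathcal{O}(k^2\cdot|V|^2)$ time, obtained by the \textsc{Improvement Algorithm}. The key observation is that the matching $M$ lets us choose a \emph{good starting partition} for the algorithm: instead of deleting arbitrary edges, I would delete $k-1$ edges in a way that leaves each of the $k$ matching edges of $M$ inside a distinct part. This guarantees that every community in the initial connected $k$-partition contains both endpoints of some edge of $M$, hence has size at least $2$. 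The main point is then to argue that the \textsc{Improvement Procedure} preserves this lower bound: a single $(x,y)$-change moves vertices from the larger community $C_i$ to the smaller one $C_j$, strictly increasing $|C_j|$ and, by the pigeonhole argument in \Cref{not sat implies not lex}, leaving behind a component $T_v$ with $|V(T_v)| > |C_j| \ge 2$; so both resulting parts $A$ and $B$ have size at least $2$. Thus the invariant ``every community has size $\ge 2$'' is maintained throughout, and the algorithm terminates with a connected \emph{generalized} $k$-community structure in which no community has size one --- which is exactly a $k$-community structure.

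The step I expect to require the most care is verifying that the size-two invariant is genuinely preserved by every \textsc{Improvement Procedure} step, and not merely at the start. One must check that whenever $v\in C_i$ fails \eqref{prop:k-comm1} against $C_j$, the community $C_j$ being enlarged already had size $\ge 2$, and that the leftover piece $B=V(T_v)$ satisfies $|B|>|C_j|\ge 2$; the inequality $|V(T_v)|\ge |C_j|$ from \Cref{not sat implies not lex} combined with $|C_i|>|C_j|$ should close this, but the boundary case $|C_j|=2$ must be inspected explicitly. A subtlety is that the improvement merges $C_j$ entirely into $A=(C_i\setminus V(T_v))\cup C_j$, so $A\supseteq C_j$ automatically has size at least $2$ as well. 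Once the invariant is secured, both the correctness and the complexity follow: the number of iterations is bounded by $1+\frac{k+1}{2}\cdot|V|$ via \Cref{kn-tuples_bounded}, and each iteration costs $\mathcal{O}(k|V|)$ by \Cref{check if kcs linear time,not sat implies not lex}, yielding the claimed $\mathcal{O}(k^2\cdot|V|^2)$ bound; finding the matching and the initial partition is subsumed by this. I would note that the theorem statement mentions linear time for producing the structure, which appears to be a typo for the $\mathcal{O}(k^2\cdot|V|^2)$ bound consistent with the displayed ``Furthermore'' clause, so I would simply carry the latter bound through.
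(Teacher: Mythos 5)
Your proof is correct and follows essentially the same route as the paper: necessity via \Cref{k-matching}, and sufficiency by seeding a connected $k$-partition with the $k$ matching edges (so every part has size at least $2$ from the start) and then running the \textsc{Improvement Algorithm}. You even make explicit the one point the paper leaves implicit, namely that each \textsc{Improvement Procedure} step replaces $C_i, C_j$ by parts $A \supseteq C_j \cup \{v\}$ and $B = V(T_v)$, both of size greater than $|C_j| \ge 2$, so the size-two invariant is preserved throughout and the output is a genuine $k$-community structure.
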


\begin{proof}
	By \Cref{k-matching}, we know that if $T$ admits a $k$-community structure, then $T$ contains a matching of size~$k$.
	Conversely, assume that $T$ contains a matching $M$ of size $k$, for $k\ge 2$.
	We construct a connected $k$-partition $\Pi=\{C_1,\ldots,C_k\}$ of $T$ such that $|C_i|\geq 2$ for $i \in \intInterval{k}$ as follows.
	The endpoints of the $k$ edges in $M$ define the $k$ initial subsets of the partition, where two vertices belong to the same subset if and only if they are endpoints of a same edge in~$M$.
	We then assign iteratively each remaining vertex to a subset, chosen in such a way that each subset always induces a connected subgraph.
	This results in a connected $k$-partition $\Pi = \{C_1,\ldots,C_k\}$ of $T$ such that $|C_i|\geq 2$ for $i \in \intInterval{k}$.
	Hence, similarly as in the proof of \Cref{thrm: gen k-comm}, we can use the \textsc{Improvement Algorithm} to obtain a connected $k$-community structure of $T$ in time $\mathcal{O}(k^2\cdot \vert V \vert ^{2})$.
\end{proof}

Now, let us consider forests.
\Cref{thrm: gen k-comm} can be extended to forests in a rather straightforward way.
Indeed, let $F=(V,E)$ be a forest with $\vert V\vert\ge k$ and containing $m$ connected components $T_1,\ldots,T_m$.
If $m\geq k$, then  $C_1=V(T_1),\ldots,C_{k-1}=V(T_{k-1}),C_k=V(T_k)\cup\ldots\cup V(T_m)$ forms a generalized $k$-community structure.
So we may assume that~$m<k$.
Let $T_1,\ldots,T_{m'}$, $m'<m$, be the connected components of $F$ containing each exactly one vertex.
We set $C_1=V(T_1),\ldots,C_{m'}=V(T_{m'})$, and in order to find a generalized $k$-community structure in $F$, it is enough to find integers $s_{1},s_{2},\ldots,s_{m-m'}$ such that $s_{1}+s_{2}+\ldots+s_{m-m'}=k-m'$ and $s_{1}\le |V(T_{m'+1})|, \ldots, s_{m-m'}\le |V(T_m)|$.
Such integers always exist since $k-m' \le \vert V\vert-m' $, and, they can be determined in linear time.
Then, we only need to compute a generalized $s_j$-community structure in $T_{m'+j}$, for each $j\in\intInterval{m-m'}$.
This can be done using \Cref{thrm: gen k-comm}.
Thus, we obtain the following.

\begin{corollary}\label{forest generalized com-struct}
	Let $F=(V,E)$ be a forest such that $\vert V\vert\ge k$, for some integer $k\ge 2$.
	Then, $F$ admits a  generalized $k$-community structure that can be computed in time $\mathcal{O}(k^2\cdot |V|^2)$.
\end{corollary}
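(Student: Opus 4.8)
The plan is to reduce the forest case to the tree case already settled by \Cref{thrm: gen k-comm}, exploiting the fact that in a forest the connected components are pairwise non-adjacent, so a vertex never has a neighbour outside its own component; this will make the community-structure inequality \eqref{prop:k-comm1} automatic across distinct components. Write $T_1,\dots,T_m$ for the connected components of $F$. First I would dispose of the easy regime $m \ge k$: simply set $C_i = V(T_i)$ for $i \in \intInterval{k-1}$ and $C_k = V(T_k) \cup \dots \cup V(T_m)$. Each $C_i$ is non-empty, and since every vertex lies in the same community as all of its neighbours, the right-hand side of \eqref{prop:k-comm1} vanishes for every other community, so the inequality holds trivially. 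This yields a generalized $k$-community structure in linear time.

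The substantive case is $m < k$, where some components must be split into several communities. Here I would first isolate the single-vertex components $T_1,\dots,T_{m'}$ and let each be its own size-one community $C_1,\dots,C_{m'}$; these cannot be subdivided and require no further treatment. It then remains to partition the vertices of the non-trivial components $T_{m'+1},\dots,T_m$ (each on at least two vertices) into the remaining $k-m'$ communities. The key combinatorial step is to choose, for each such component $T_{m'+j}$, an integer $s_j \ge 1$ counting the communities it contributes, subject to $\sum_{j} s_j = k-m'$ and $1 \le s_j \le |V(T_{m'+j})|$. For each $j$ with $s_j \ge 2$ I would invoke \Cref{thrm: gen k-comm} on $T_{m'+j}$, whose order is at least $s_j$, to obtain a connected generalized $s_j$-community structure; for $s_j = 1$ the whole component is taken as a single community. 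Combining these pieces with the singleton communities gives a $k$-partition, and every vertex is satisfied: within its component the sub-structure guarantees the inequality, and against any other community the cross-component neighbourhood is empty, so \eqref{prop:k-comm1} again holds with a vanishing right-hand side.

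For the running time, the only non-linear work is the per-component calls to \Cref{thrm: gen k-comm}, costing $\mathcal{O}(s_j^2 \cdot |V(T_{m'+j})|^2)$ each. Writing $a_j = s_j \cdot |V(T_{m'+j})|$, the total is $\sum_j a_j^2 \le \bigl(\sum_j a_j\bigr)^2$, and since $|V(T_{m'+j})| \le |V|$ and $\sum_j s_j \le k$ we get $\sum_j a_j \le |V| \sum_j s_j \le k \cdot |V|$; hence the total cost is $\mathcal{O}(k^2 \cdot |V|^2)$, as required.

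The step that needs the most care, although it is not a genuine obstacle, is establishing the existence of the size distribution $(s_j)$: one must verify both the lower bound $s_j \ge 1$, ensuring that every non-trivial component is covered, and the upper bound $s_j \le |V(T_{m'+j})|$, ensuring that \Cref{thrm: gen k-comm} applies. Starting from $s_j = 1$ for all $j$ and greedily adding the remaining $k-m$ units respecting capacities, these reduce to the chain $m - m' \le k - m' \le |V| - m'$, where the first inequality uses $m \le k$ and the second uses $k \le |V|$; both bounds are therefore met, and the assignment is produced in linear time. The remainder is essentially bookkeeping, since the disconnectedness of $F$ renders the inequalities across distinct components automatic.
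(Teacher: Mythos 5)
Your proof is correct and follows essentially the same route as the paper: handle $m\ge k$ by taking whole components as communities, make each singleton component its own community, distribute the remaining $k-m'$ communities over the non-trivial components via capacities $s_j \le |V(T_{m'+j})|$, and invoke \cref{thrm: gen k-comm} on each. You merely spell out details the paper leaves implicit (the vanishing cross-component terms in \eqref{prop:k-comm1}, the feasibility chain for the $s_j$, and the summed running-time bound), which is fine.
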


\Cref{k-cs not small in trees poly} can also be extended to forests as follows.

\begin{theorem}
	Let $k\ge 2$ be a positive integer.
	Let $F=(V,E)$ be a forest  and let $I \subseteq V$ be the set of isolated vertices of~$F$.
	Then, $F$ admits a $k$-community structure if and only if $F$ contains a matching $M$ such that $|M| + \lfloor |I|/2 \rfloor \geq k$.
	Furthermore, if such a $k$-community structure exists, it can be found in time $\mathcal{O}(|V|^{2})$.
\end{theorem}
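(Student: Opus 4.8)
The plan is to mirror the proof of \Cref{k-cs not small in trees poly}, isolating the rôle of the vertices in $I$ (which are always satisfied, since both sides of \eqref{prop:k-comm2} vanish for a vertex with no neighbours) from that of the non-isolated part $F[V\setminus I]$, which is itself a forest with no isolated vertices.

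For necessity, suppose $F$ admits a $k$-community structure $\Pi$. I would classify each community $C\in\Pi$ as an \emph{edge community} if $F[C]$ contains an edge, and as \emph{edgeless} otherwise, writing $a$ and $b$ for their respective numbers, so $a+b=k$. As in \Cref{k-matching}, any non-isolated vertex must have a neighbour in its own community, hence an edgeless community consists solely of vertices of $I$; since each such community has size at least two we get $2b\le|I|$, i.e.\ $b\le\lfloor|I|/2\rfloor$. Picking one edge from each edge community yields a matching $M$ with $|M|=a$ (the chosen edges are pairwise disjoint because the communities are), so $|M|+\lfloor|I|/2\rfloor\ge a+b=k$.

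For sufficiency I would first treat the case $I=\emptyset$: a forest $F'$ with no isolated vertices and a matching of size $k$ admits a $k$-community structure. The construction is to split some tree components into in-tree $s_i$-community structures (valid by \Cref{k-cs not small in trees poly}) and group the remaining components into communities that are \emph{unions of entire connected components}. The key point is that every vertex has all its neighbours inside its own component; thus for a whole-component community \eqref{prop:k-comm2} holds trivially (its right-hand side is $0$), while for an in-tree community the cross-component comparisons are likewise trivial and the in-component ones are exactly the single-tree case. Sweeping from ``every splittable tree split maximally'' down to ``everything merged'' by alternately un-splitting one tree and merging two whole-component communities changes the community count by one at each step, so every value between $2$ and $\mu(F')$ is attainable; as $2\le k\le\mu(F')$ we reach exactly $k$. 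To reduce the general case with $|I|\ge 2$ to this one, I would take $M$ maximum, pair up the vertices of $I$ by inserting a new edge inside each pair (turning one triple into a path on three vertices if $|I|$ is odd). The resulting forest $\hat F$ has no isolated vertex and $\mu(\hat F)=|M|+\lfloor|I|/2\rfloor\ge k$, hence a $k$-community structure; deleting the added edges restores $F$ while leaving their endpoints isolated (thus trivially satisfied) and changing no other neighbour count or community size, so the partition survives.

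The hard part will be the borderline case $|I|=1$, where $\lfloor|I|/2\rfloor=0$ forces $|M|\ge k$ and the lone vertex $w$ cannot be paired: adding $w$ to an in-tree community may break that community's tightest vertex, and reserving a whole tree to host $w$ fails precisely when $F[V\setminus I]$ is a single tree. Here I would aim to build a $k$-community structure of $F[V\setminus I]$ containing a size-two community $\{\ell,p\}$, with $\ell$ a leaf and $p$ its neighbour, and place $w$ there. The only vertex of this community with an outside neighbour is $p$, which by \Cref{observation1trees} has at most one neighbour in each other community $C'$; after adding $w$ its requirement towards $C'$ becomes $\tfrac{1}{2}\ge\tfrac{1}{|C'|}$, which holds since $|C'|\ge2$, while $\ell$ and $w$ are trivially satisfied. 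Guaranteeing that such a small leaf community can always be carved out while still reaching exactly $k$ communities is, I expect, the most delicate step. Finally, for the running time I would compute a maximum matching of the forest in linear time (repeatedly matching a leaf to its neighbour), test $|M|+\lfloor|I|/2\rfloor\ge k$, and, if it holds, assemble the partition directly from the matching by the splitting/merging scheme above rather than iterating the \textsc{Improvement Algorithm}; a concluding correctness check via \Cref{check if kcs linear time} costs $\mathcal{O}(k|V|)=\mathcal{O}(|V|^2)$, which dominates and yields the claimed bound.
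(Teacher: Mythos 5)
Your necessity argument is exactly the paper's, and your sufficiency construction for $|I|\neq 1$ is sound: splitting components with large matchings via \Cref{k-cs not small in trees poly} and grouping whole components is what the paper does, and your trick of pairing up the isolated vertices with virtual edges and deleting them afterwards is a legitimate (and arguably cleaner) alternative to the paper's direct assignment of $I$ to the first $\lfloor|I|/2\rfloor$ communities. The genuine gap is where you predicted it, the case $|I|=1$, but it is worse than a missing technicality: the step you propose is impossible in general. You want a $k$-community structure of $F[V\setminus I]$ containing a pendant-edge community $\{\ell,p\}$. Take $F[V\setminus I]$ to be a double star (adjacent centers $c_1,c_2$, each with ten leaves) and $k=2$. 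In any $2$-community structure, every leaf must lie in the same community as its center, since a vertex with no neighbour in its own community but a neighbour elsewhere violates \eqref{prop:k-comm2}; hence if $\{x,c_1\}$ were a community for some leaf $x$ of $c_1$, every other leaf of $c_1$ would be unsatisfied. So no $2$-community structure of this tree contains a size-two community at all, and your plan cannot be executed, even though the double star plus one isolated vertex does admit a $2$-community structure, so the theorem's conclusion must be reached another way.

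The paper's resolution of this case is the piece you are missing, and it is short: compute a \emph{connected} $k$-community structure of $F[V\setminus I]$ componentwise, and add the lone isolated vertex $w$ to a community $C_i$ of \emph{minimum size}. Because the structure is connected with all communities of size at least two, every $u\in C_i$ has $|N_{C_i}(u)|\geq 1$, and by \Cref{observation1trees} it has $|N_{C_j}(u)|\leq 1$ for every other community $C_j$; together with $|C_i\cup\{w\}|-1=|C_i|\leq|C_j|$ this gives
\[
\frac{|N_{C_i\cup\{w\}}(u)|}{|C_i\cup\{w\}|-1}=\frac{|N_{C_i}(u)|}{|C_i|}\geq\frac{1}{|C_i|}\geq\frac{1}{|C_j|}\geq\frac{|N_{C_j}(u)|}{|C_j|}\,,
\]
while $w$ itself is trivially satisfied and vertices of other communities are only helped, since enlarging $C_i$ weakens the right-hand side of their constraints. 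This is precisely the ``tightest vertex'' worry you raised, defused by minimality of $|C_i|$ plus the two degree bounds. One last caveat on running time: you cannot really ``assemble the partition directly from the matching rather than iterating the \textsc{Improvement Algorithm}'', because the in-tree $s_i$-community structures your construction relies on are themselves produced by that algorithm via \Cref{k-cs not small in trees poly}; your cost is therefore the same as the paper's, dominated by those in-tree computations rather than by the final verification.
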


\begin{proof}
	We first show necessity.
	Let $\Pi=\{C_1, \dots,C_k\}$ be a $k$-community structure in $F$ and let $M\subseteq E$ be empty.
	For every community $C_i$, $i\in \{1,\ldots,k\}$, such that $F[C_i]$ contains at least one edge, we assign one edge of $F[C_i]$ to~$M$.
	If $r$ communities induce graphs with at least one edge, then $M$ is a matching of size~$r$.
	Let $\ell = k - r$ be the number of communities that induce edgeless graphs.
	Recall that every community contains at least $2$ vertices.
	Observe that, if $C_j$, for any $j\in \{1,\ldots,k\}$, is a community inducing an edgeless graph and if $v \in C_j$, then $v$ is isolated in $F$, otherwise $v$ cannot be satisfied with respect to~$\Pi$.
	We conclude that $F$ contains at least $2\ell$ isolated vertices.
	Hence, $F$ contains a matching $M$ such that $|M| + \lfloor |I|/2 \rfloor \geq k$.

	We now show sufficiency.
	If $k\leq \lfloor |I|/2 \rfloor$, it is enough to partition $I$ into $k$ sets $C_1,\ldots,C_k$, each containing at least two vertices, and to add the remaining vertices of $F$ to one of these sets.
	Since there are no edges between any two distinct communities, this gives us a $k$-community structure.\newline
	So we may assume now that $k> \lfloor |I|/2 \rfloor$.
	We start by creating $k$ empty sets $C_1, \dots, C_k$.
	All along our procedure, we make sure that for every vertex $v \in C_i$, it holds that

	\begin{equation}
		\label{invariant}
		\frac{\vert N_{C_i} (v)\vert}{\vert C_i \vert -1} \ge \frac{\vert N_{C_j} (v)\vert}{\vert C_j \vert}\ \text{for all $i,j \in \intInterval{k}$, $i\neq j$}\,.
	\end{equation}

	Let $\ell = \lfloor |I|/2 \rfloor$.
	If $\ell\geq 1$, we assign the vertices of $I$ to the sets $C_1,\dots,C_\ell$, such that every $C_i$ with $i \in \intInterval{\ell}$ contains at least $2$ vertices of~$I$.
	Notice that \cref{invariant} still holds, since after assigning the vertices of $I$ to the sets $C_1,\dots,C_\ell$, each $C_i$, for $i \in \intInterval{\ell}$, consists of a subset of connected components of~$T$.
	If $\ell=0$, all sets $C_i$, $i\in \intInterval{k}$, remain empty at this stage.
	Recall that by assumption, $F$ contains a matching $M$ of size at least $k - \ell$.
	Let $M$ be a matching in $F$ of size exactly $k - \ell$.

	Then, if there exists no connected component $T=(V(T),E(T))$ of $F\backslash I$ such that $T$ verifies $|E(T) \cap M| = 1$, and there exists a connected component $T'=(V(T'),E(T'))$ of $F \backslash I$ such that $T'$ that verifies $|E(T') \cap M| = 0$, we remove one edge of the matching $M$ from some connected component $T''$ of $F\backslash I$ that verifies $|E(T'') \cap M| \geq 2$ (such a $T''$ exists since $|M|=k-\ell>0$), and add some edge of $T'$ to $M$ instead.
	Note that $M$ remains a matching.
	After this procedure, also notice that at this point, either there is a connected component $T = (V (T),E(T))$ of $F$ such that $T$ verifies $|E(T) \cap M| = 1$, or all connected components $T$ of $F\setminus I$ verify $|E(T) \cap M| \geq 2$.

	For every connected component $T=(V(T),E(T))$ of $F$ such that $|E(T) \cap M| = 1$, if such a component exists, we add every vertex of $T$ to a set $C_j$ such that $j$ is the smallest possible index from the set  $\{\ell+1,\ldots,k\}$ and $C_j$ is still empty.
	Notice that such a set $C_j$ always exists, since $M$ has size $k-\ell$.
	Furthermore, notice that after assigning every vertex of $T$ to $C_j$, the set $C_j$ contains at least $2$ vertices and (\ref{invariant}) is satisfied.
	Then, for connected components with $|E(T) \cap M| = p \geq 2$, we use \Cref{k-cs not small in trees poly} to compute a connected $p$-community structure $\{P_1, \dots, P_p\}$ of $T$ in time $\mathcal{O}(k^2\cdot|V(T)|^{2})$, and assign the vertices of each set $P_i$, $i=1,\ldots,p$, to a different set $C_j$ that is still empty, for $j \in \{\ell+1,\ldots,k\}$.
	Again, such sets $C_j$ always exist, since $M$ has size $k-\ell$.
	Furthermore, (\ref{invariant})  is satisfied, since $\{P_1, \dots, P_p\}$ is a $p$-community structure of~$T$.
	Then, up to this point we have that every set $C_i$, $i\in \intInterval{k}$, contains at least two vertices and that (\ref{invariant}) is satisfied.

	In order to assign the remaining vertices (either isolated vertices or connected components of $T$ containing no edges of $M$) to the sets $C_1,\ldots,C_k$, we distinguish two cases.
	If $\ell\geq 1$ or if there exists at least one connected component $T=(V(T),E(T))$ of $F$ that verifies $|E(T) \cap M| = 1$, we simply add all of the unassigned vertices to $C_1$ and inequality (\ref{invariant}) remains satisfied, since in this case $C_1$ consists of a subset of connected components of~$F$.
	If $\ell = 0$ (i.e.\ $\vert I\vert =0$ or $\vert I\vert =1$) and no connected component $T=(V(T),E(T))$ of $F$ verifies $|E(T) \cap M| = 1$, there exists at most one unassigned vertex, namely an isolated vertex, say~$v$. If so, we simply add it to a community among $C_1, \dots, C_k$ containing the smallest number of vertices, say~$C_i$.
	Let us denote $C_i' = C_i \cup \{v\}$.
	Then, for all vertices $u \in C_i$ and all $j\in \intInterval{k}\backslash i$, we have $\vert N_{C_i'}(u)\vert \geq \vert N_{C_j}(u)\vert$ (recall that we used \Cref{k-cs not small in trees poly} to compute a connected community structure and hence every vertex in $T$ has at most one neighbor outside of its own community), and $\vert C_i'\vert-1=\vert C_i \vert\leq \vert C_j \vert$.
	Therefore, inequality $\frac{\vert N_{C_i'} (u)\vert}{\vert C_i' \vert -1} \ge \frac{\vert N_{C_j} (u)\vert}{\vert C_j \vert}$ still holds.

	Let us now analyse the complexity of our procedure described above.
	Computing $I$ and $M$ takes linear time, and so do the first steps of the algorithm that assign the vertices of $I$ and the connected components of $F$ intersecting $M$ on a single edge to some sets $C_i$, $i\in\{1,\ldots,k\}$.
	Let $T_1, \ldots, T_t$ denote the connected components of $F$ containing at least $2$ edges of $M$, and let $p_i = |E(T_i) \cap M|$ for $i=1,\ldots,t$.
	Since for each connected component $T_i$, for $i \in \intInterval{t}$ we can compute a $p_i$-community structure in time $\mathcal{O}(k^2\cdot |V(T)|^{2})$, the claimed complexity follows.
\end{proof}

\section{Threshold graphs}
\label{sec-thresh}

In this section, we focus on the existence of a (generalized) $2$-community structure in threshold graphs.
A \textit{threshold graph} is a graph that can be constructed from the one-vertex graph by repeatedly adding an isolated vertex or a universal vertex.
An equivalent definition (see~\cite{threshold}) is that it is a graph $G=(V,E)$, where $V$ can be partitioned into a clique $Q=\{v_1,\ldots,v_q\}$ and a stable set $S=\{w_1,\ldots,w_s\}$ such that $N(w_i)\subseteq N(w_{i+1})$ for $i\in \intInterval{s-1}$ (and therefore also, without loss of generality, $N(v_i)\supseteq N(v_{i+1})$ for $i\in \intInterval{q-1}$).
Notice that a threshold graph $G=(V,E)$, which is not connected, corresponds to the union of a connected threshold graph $H=(V_H,E)$ and a set $I$ of isolated vertices.

Let us start by showing that every threshold graph $G$ admits a generalized $2$-community structure.
Observe that in any threshold graph, $N(w_1)$ corresponds to the set of all universal vertices of~$G$.
Notice that if $G$ does not contain any universal vertex, then $G$ is disconnected and $w_1$ is an isolated vertex.
Consequently, as pointed out in \Cref{2commUniv}, every threshold graph containing at least two vertices admits a generalized $2$-community structure $\Pi=\{\{w_1\},V\backslash \{w_1\}\}$.

For $2$-community structures, we obtain the following for the case of connected threshold graphs.

\begin{theorem}
	\label{thm:threshold}
	Let $G=(V,E)$ be a connected threshold graph with $n\geq 4$ vertices.
	Then, $G$ admits a $2$-community structure if and only if $G$ is not isomorphic to the star $S_{n-1}$.
	Furthermore, if it exists, a $2$-community structure can be found in time $\mathcal{O}(|E|)$.
\end{theorem}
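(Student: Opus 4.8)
The necessity is immediate from the Introduction: a star admits no $k$-community structure for any $k \ge 2$, so if $G \cong S_{n-1}$ it has no $2$-community structure. For the converse I would exploit the clique/stable description $V = Q \cup S$ with the nested neighbourhoods $N(w_1)\subseteq\cdots\subseteq N(w_s)$. Let $U$ be the set of universal vertices; since $G$ is connected, $v_1$ is universal, so $t := |U| \ge 1$. If $S = \emptyset$ then $G \cong K_n$ and any split into parts of sizes $\lfloor n/2\rfloor$ and $\lceil n/2\rceil$ works, so I would assume $S \ne \emptyset$. A first useful reduction: in any $2$-partition $\{C_1,C_2\}$ with $|C_1|,|C_2|\ge 2$, every universal vertex is satisfied, since it has $|C_i|-1$ neighbours in its own part and $|C_j|$ in the other, making both sides of \eqref{prop:k-comm2} equal to $1$. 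Hence only the non-universal vertices must be checked. A second reduction, proved from the nested structure, is that $w_1$ (and any vertex $x$ with $N(x)=U$) is adjacent to exactly the universal vertices: if $v_i \in N(w_1)$ then $v_i \in N(w_j)$ for all $j$, so $v_i$ is universal. These minimum-degree vertices are the critical ones, and for such $x \in C_1$ the constraint collapses to the purely size-based inequality $|U\cap C_1|\cdot |C_2| \ge |U\cap C_2|\cdot(|C_1|-1)$.

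Guided by this, I would build the partition around two hubs: a universal vertex $a$ placed in $C_1$, and a second vertex $b$ placed in $C_2$, chosen to be a second universal vertex if $t \ge 2$ and otherwise the non-universal vertex of maximum degree. The hypothesis that $G$ is not a star is exactly what guarantees the existence of $b$ in the case $t=1$: a connected threshold graph with a single universal vertex and no non-universal vertex of degree at least $2$ is precisely a star. Because $a$ is universal, every remaining vertex is adjacent to $a$; by \Cref{allneighboursSameCommunity}, any vertex whose only neighbour is $a$ is forced into $C_1$, so I would assign all such vertices there first. The remaining vertices are then distributed one at a time, each being placed into the currently smaller of $C_1$ and $C_2$.

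The heart of the argument is the verification that the resulting partition satisfies \eqref{prop:k-comm2} at every non-universal vertex. The distribution rule produces one of two regimes: either enough ``free'' vertices exist to reach $\big||C_1|-|C_2|\big| \le 1$, or the forced vertices already make $C_1$ much larger. In the balanced regime, a vertex adjacent to exactly the two hubs satisfies \eqref{prop:k-comm2} precisely because $\big||C_1|-|C_2|\big|\le 1$; in the unbalanced regime, the same vertices land in the small part $C_2$ and are satisfied because the large size of $C_1$ makes the right-hand side of \eqref{prop:k-comm2} small (both facts I would verify directly, as in the motivating special cases). For the higher-tier non-universal vertices, which have strictly more own-community neighbours, I would invoke the nested-neighbourhood property to show that their extra neighbours cannot be distributed so unfavourably as to violate \eqref{prop:k-comm2}; \Cref{allCommunityInneighbourhood} and \Cref{prop:preassignment} also help rule out the impossible placements. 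I expect this monotonicity step, together with the bookkeeping for vertices that (when $t=1$) are not adjacent to $b$, to be the main obstacle.

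Finally, for the complexity, the construction is fully explicit: computing degrees, extracting $U$ and the two hubs from the threshold ordering, performing the forced and balancing assignments, and verifying the output via \Cref{check if kcs linear time} can all be carried out in $\mathcal{O}(n+m)$ time. Since $G$ is connected with $n \ge 4$, we have $m \ge n-1$, so this bound is $\mathcal{O}(|E|)$, as claimed.
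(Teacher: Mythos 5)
Your necessity argument, your reduction to non-universal vertices (a universal vertex always has ratio exactly $1$ on both sides of \eqref{prop:k-comm2}), and your complexity analysis are all fine, but the core of your construction has a genuine gap: distributing the unforced vertices ``one at a time into the currently smaller community'' is adjacency-oblivious, and the partition it produces can violate \eqref{prop:k-comm2} at the non-universal \emph{clique} vertices. Concretely, take the threshold graph with clique $Q=\{v_1,v_2,v_3,v_4\}$ and stable set $S=\{w_1,w_2\}$, where $v_1,v_2$ are universal and $N(w_1)=N(w_2)=\{v_1,v_2\}$ (build order: $v_3$; then $v_4$ universal; then $w_1,w_2$ isolated; then $v_2$ universal; then $v_1$ universal). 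Here $t=2$, so $a=v_1\in C_1$ and $b=v_2\in C_2$, there are no forced vertices, and a legitimate run of your greedy (order $v_3,w_2,w_1,v_4$, sizes balanced at every step) yields $C_1=\{v_1,v_3,w_1\}$ and $C_2=\{v_2,w_2,v_4\}$. Then $v_3$ has one neighbour in its own community and two in the other, with $|C_1|=|C_2|=3$, so $\tfrac{1}{2}<\tfrac{2}{3}$ and $v_3$ is not satisfied. This also refutes the monotonicity claim your verification plan rests on (that ``higher-tier'' non-universal vertices have strictly more own-community neighbours): under size-only balancing, a non-universal clique vertex can be cut off from almost all of its clique neighbours, and no after-the-fact verification can repair this, since the produced partition is simply not a $2$-community structure. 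The same defect appears when $t=1$: with $Q=\{v_1,\ldots,v_4\}$, only $v_1$ universal and $S=\{w_1\}$ of degree one, one admissible greedy run ends with $C_1=\{v_1,w_1,v_4\}$, $C_2=\{v_2,v_3\}$, where $v_4$ fails \eqref{prop:k-comm2}.

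The fix requires an adjacency-aware split, which is what the paper does: it keeps all of $Q\setminus\{v_1\}$ together in $C_2$ and cuts the stable set by degree, setting $C_1=\{v_1,w_1,\ldots,w_k\}$ and $C_2=\{v_2,\ldots,v_q,w_{k+1},\ldots,w_s\}$ with $k=\max\{i: d(w_i)\le \frac{n-1}{i}\}$. Keeping the clique intact makes every vertex of $Q\cap C_2$ adjacent to $w_{k+1}$ automatically satisfied (it sees all of $C_2$), and the remaining vertices ($w_i$ for $i\le k$, $w_i$ for $i\ge k+1$, and the clique vertices not adjacent to $w_{k+1}$) are handled by explicit inequalities driven by the choice of $k$; balancedness of $|C_1|$ and $|C_2|$ plays no role, and indeed the paper's $C_1$ may be much smaller than $C_2$. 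If you want to salvage your two-hub idea, you would need to replace the size-greedy distribution by a rule that never separates the non-universal clique vertices from one hub and that decides, for each stable vertex, which side it joins as a function of its degree --- at which point you have essentially rediscovered the paper's construction.
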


\begin{proof}
	Necessity follows form the fact that the star $S_{n-1}$ does not admit any $2$-community structure.

	We now show sufficiency.
	Since by assumption $G$ is connected and not isomorphic to the star $S_{n-1}$, it follows that~$q>1$.
	We claim that $\Pi=\{C_1,C_2\}$ with  $C_1=\{v_1,w_1,\ldots,w_k\}, C_2=\{v_2,\ldots, v_q, w_{k+1},\ldots, w_s\}$ and $k=\max\{i:d(w_i)\leq \frac{n-1}{i}\}$ is a $2$-community structure in~$G$.
	On \Cref{thresholdgraph2commEx}, we show an example of such a $2$-community structure $\Pi=\{C_1,C_2\}$ on a threshold graph $G$ on $10$ vertices with $q = s= 5$ and $k=\max\{i:d(w_i)\leq \frac{9}{i}\}=3$.

	\begin{figure}[!ht]
		\centering
		\includegraphics[width=0.5\linewidth]{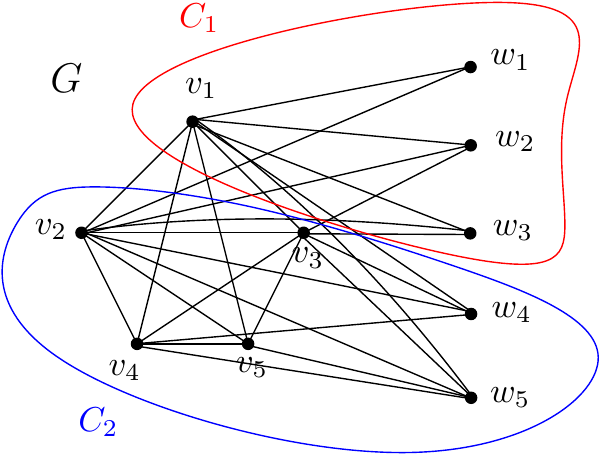}
		\caption{An example of a threshold graph $G$ and a $2$-community structure $\Pi=\{C_1,C_2\}$ in $G$ constructed as described at the beginning of the proof of \Cref{thm:threshold}.}
		\label{thresholdgraph2commEx}
	\end{figure}

	In order to show that the partition $\Pi=\{C_1,C_2\}$ is a $2$-community structure, we need to show that $|C_1|\geq 2$, $|C_2|\geq 2$, and that all vertices are satisfied with respect to~$\Pi$.
	First, notice that we may assume without loss of generality that $d(w_s)=q$.
	Indeed, if $d(w_s)<q$, then $v_q$ has no neighbour in $S$, and it may then be considered as a vertex of $S$, adjacent to all the vertices in~$Q$.
	This also allows us to assume that $s\geq 1$ and hence, that $w_1$ actually exists.
	Second, we trivially know that $1\in \{i:d(w_i)\leq \frac{n-1}{i}\}$.
	Therefore, $|C_1|\geq 2$.
	Assume now that $s\in \{i:d(w_i)\leq \frac{n-1}{i}\}$.
	Then, $d(w_s)=q\leq \frac{n-1}{s}$, which means that $q\cdot s\leq n-1=q+s-1$.  It is easy to see that the only possibility for this to happen, knowing that $q\geq 2$ and $s\geq 1$, is that~$s=1$. Since $s=1$ and $d(w_s)=q$, we conclude that $G$ is a clique. In this case $q \geq 3$ and we have $C_1=\{v_1,w_1\}$, $C_2=\{v_2,\ldots, v_q\}$ and $\Pi$ is a $2$-community structure (notice that $|C_2|\geq 2$ since $q=n-s\geq 3$).

	Assume now that $s\not\in \{i:d(w_i)\leq \frac{n-1}{i}\}$, which directly implies that $|C_2|\geq 2$.
	We need to prove that all vertices are satisfied with respect to~$\Pi$.
	We start with the vertices in~$S$.
	Recall that they all have exactly one neighbour in $C_1$, namely~$v_1$.

	\begin{itemize}
		\item[$\bullet$] $w_k$: $\frac{|N_{C_2}(w_k)|}{|C_2|}=\frac{d(w_k)-1}{n-k-1}\leq \frac{\frac{n-1}{k}-1}{n-k-1}=\frac{1}{k}=\frac{|N_{C_1}(w_k)|}{|C_1|-1}$;
		\item[$\bullet$] $w_i$ for $i<k$: it immediately follows from the previous case, since $|N_{C_2}(w_i)|\leq |N_{C_2}(w_k)|$ and $|N_{C_1}(w_i)|= |N_{C_1}(w_k)|$;
		\item[$\bullet$] $w_{k+1}$: $\frac{|N_{C_2}(w_{k+1})|}{|C_2|-1}=\frac{d(w_{k+1})-1}{n-k-2}> \frac{\frac{n-1}{k+1}-1}{n-k-2}=\frac{1}{k+1}=\frac{|N_{C_1}(w_{k+1})|}{|C_1|}$;
		\item[$\bullet$] $w_i$ for $i>k+1$: it immediately follows from the previous case, since $|N_{C_2}(w_i)|\geq |N_{C_2}(w_{k+1})|$ and $|N_{C_1}(w_i)|= |N_{C_1}(w_{k+1})|$.
	\end{itemize}

	Let us now consider the vertices in~$Q$.
	Vertex $v_1$ is satisfied, since it is universal.
	Next, all vertices in $Q\cap C_2$ that are adjacent to $w_{k+1}$ are also satisfied, since they are adjacent to all the vertices in~$C_2$.
	Consider now vertex~$v_q$.
	From the above, we may assume that it is not adjacent to $w_{k+1}$ (and hence, it has exactly one neighbour in $C_1$, namely $v_1$).
	Notice that, since $w_{k+1}$ has only neighbours in the clique, we have $q\geq d(w_{k+1})> \frac{n-1}{k+1}=\frac{q+s-1}{k+1}$, which implies that $q\cdot k> s-1$.
	Hence, we obtain the following:

	\begin{align*}
		\frac{|N_{C_2}(v_q)|}{|C_2|-1} & \geq \frac{q-1}{q+s-k-2}=\frac{(q-1)(k+1)}{(q+s-k-2)(k+1)}=\frac{qk+q-k-1}{(q+s-k-2)(k+1)} \\
		 & >\frac{s-1+q-k-1}{(q+s-k-2)(k+1)}=\frac{1}{k+1}=\frac{|N_{C_1}(v_q)|}{|C_1|}\,.
	\end{align*}

	Thus, $v_q$ is satisfied with respect to~$\Pi$.
	Finally, all vertices $v_i$, for $i<q$ and which are not adjacent to $w_{k+1}$, are also satisfied, since $|N_{C_2}(v_i)|\geq |N_{C_2}(v_q)|$ and $|N_{C_1}(v_i)|=1$.
	So we may apply the same arguments as for~$v_q$.

	Computing the degrees of $G$ and determining $k=\max\{i:d(w_i)\leq \frac{n-1}{i}\}$ can be done in time $\mathcal{O}(|E|)$ and thus, every connected threshold graph with at least $4$ vertices except stars admits a $2$-community structure that can be found in time $\mathcal{O}(|V|+|E|)$.
	Notice that since $G$ is connected, we have $|E|\geq |V|-1$ and so $\mathcal{O}(|V|+|E|)=\mathcal{O}(|E|)$ in this context.
\end{proof}

Now, let us consider a disconnected threshold graph $G=(V,E)$, i.e.\ a connected threshold graph $H=(V_H,E)$ and a set $I$ of isolated vertices. Assume that $G$ contains at least 4 vertices. If $|I|\geq 2$, say $u,v\in I$, $G$ admits a $2$-community structure $\Pi=\{\{u,v\},V_H\cup (I\setminus\{u,v\})\}$.\newline
Let us now consider the case when $|I|=1$.
In this case, the existence of a $2$-community structure seems less trivial, and there exist infinite families of such graphs that do not admit any $2$-community structure.
We present two of them here.\newline
Let $a,b,s \in \mathbb{N}^{+}$.
The graph $G_{a,b,s}=(Q\cup S \cup \{u\},E)$ (see Figure~\ref{ThNo2comm}) is defined as follows:\\
(i) $Q = \{v_1,\ldots, v_q\}$ is a clique, $S = \{w_1,\ldots,w_s\}$ is a stable set, $u$ is an isolated vertex.\newline
The vertices of the clique are partitioned into two sets: $Q = A \cup B$ with $\vert A \vert = a$ and $\vert B \vert =b$:\newline
(ii) The vertices in $A = \{v_1,\ldots,v_a\}$ are universal vertices in the graph $G\setminus \{u\}$.\\
(iii) The vertices $B=\{v_{a+1},\ldots,v_q\}$ have no neighbour in~$S$.\\
Notice, that $B$ could be empty.
\begin{figure}[!ht]
	\centering
	\includegraphics[width=0.6\linewidth]{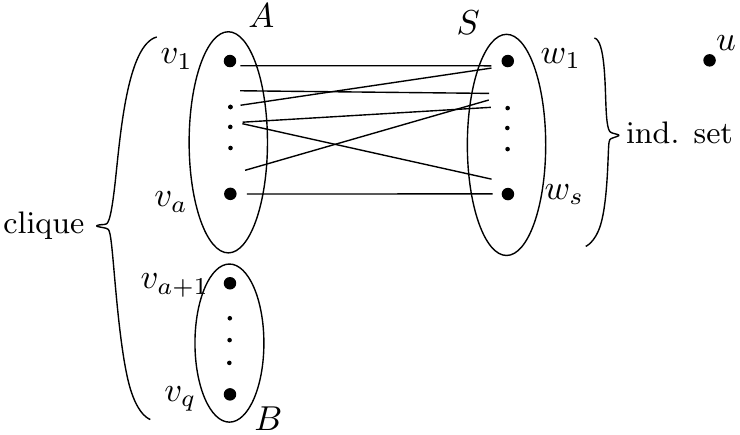}
	\caption{The graph $G_{a,b,s}$.}
	\label{ThNo2comm}
\end{figure}
\begin{theorem}\label{no 2-comm in threshold gr}
	For $a, b, s \in \mathbb{N}^{+}$, such that $b <  \frac{a+s}{s}$, $G_{a,b,s}=(Q\cup S \cup \{u\},E)$ does not admit any $2$-community structure.
\end{theorem}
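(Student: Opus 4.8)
The plan is to argue by contradiction: suppose $\Pi=\{C_1,C_2\}$ is a $2$-community structure of $G_{a,b,s}$, and first pin down the structure that $\Pi$ is forced to have. Since $u$ is isolated, its two neighbour counts both vanish and its defining inequality reads $0\ge 0$, so $u$ is satisfied wherever it lies; by symmetry of the naming I may assume $u\in C_1$. The vertices of $A$ are universal in $G\setminus\{u\}$, hence $N[v]=V\setminus\{u\}$ for every $v\in A$; because $u\in C_1$ we have $C_1\not\subseteq N[v]$ while $C_2\subseteq N[v]$, so \Cref{allCommunityInneighbourhood} forces $v\in C_2$. Thus $A\subseteq C_2$. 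Next, every $w\in S$ satisfies $N(w)=A\subseteq C_2$ and $d(w)=a\ge 1$, so $N_{C_1}(w)=\emptyset$ and \Cref{allneighboursSameCommunity} gives $w\in C_2$; hence $S\subseteq C_2$ as well. Consequently $C_1\subseteq\{u\}\cup B$, and since a community must have at least two vertices, $C_1=\{u\}\cup B_1$ for some nonempty $B_1\subseteq B$.

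Writing $b_1=|B_1|\ge 1$ and $B_2=B\setminus B_1$ with $b_2=|B_2|=b-b_1$, I would then compute the neighbourhood of an arbitrary $v\in B_1$. Since $N(v)=Q\setminus\{v\}=A\cup(B\setminus\{v\})$, we get $|N_{C_1}(v)|=b_1-1$ and $|N_{C_2}(v)|=a+b_2$, while $|C_1|=1+b_1$ and $|C_2|=a+s+b_2$. Imposing that $v$ be satisfied with respect to $C_2$, i.e.
\[
\frac{b_1-1}{b_1}\ \ge\ \frac{a+b_2}{a+s+b_2},
\]
and clearing denominators (both positive) reduces, after cancellation, to $b_1 s\ge a+s+b_2$, equivalently $b_1(s+1)\ge a+b+s$ upon substituting $b_2=b-b_1$.

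Finally I would derive the contradiction from the hypothesis $b<\frac{a+s}{s}$, i.e.\ $bs<a+s$. Using $b_1\le b$ we obtain
\[
b_1(s+1)\le b(s+1)=bs+b<(a+s)+b=a+b+s,
\]
which contradicts $b_1(s+1)\ge a+b+s$. Hence no vertex of the nonempty set $B_1$ can be satisfied, so $\Pi$ cannot be a $2$-community structure. The conceptual heart of the argument — and the step I expect to be the main obstacle to get exactly right — is the forcing part: showing that the isolated vertex $u$ must share its community with at least one vertex of $B$ while all of $A$ and $S$ are pushed into the other community; once this rigid structure is established, the proof is a short, self-contained computation. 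One should also sanity-check the degenerate sizes (e.g.\ $b_1=1$ already fails outright, and $|C_2|=a+s+b_2\ge 2$ is automatic since $a,s\ge 1$), but these cause no trouble.
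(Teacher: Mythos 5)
Your proof is correct, and its overall strategy matches the paper's: force $A$ into the community not containing $u$ via \Cref{allCommunityInneighbourhood}, force $S$ there as well via \Cref{allneighboursSameCommunity}, and then derive a contradiction from the satisfaction inequality of the clique vertices that share a community with $u$. The one genuine difference is in how $B$ is handled. The paper first invokes the true-twins property (\Cref{prop-false}) to conclude that all of $B$ lies in a single community, so the size constraint $\vert C_2\vert\ge 2$ pins down the partition exactly as $\{A\cup S,\ \{u\}\cup B\}$, and the final check is the single inequality $\frac{b-1}{b}\ge\frac{a}{a+s}$. You instead allow $B$ to split as $B_1\cup B_2$ across the two communities, with only $B_1\neq\emptyset$ guaranteed, and show that any $v\in B_1$ violates the inequality; your reduction $b_1(s+1)\ge a+b+s$ combined with $b_1\le b$ and $bs<a+s$ closes the argument. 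Your variant needs one fewer lemma and is marginally more general (it never determines the partition completely), at the cost of a slightly longer computation; the paper's version reaches a fully explicit partition before checking anything, which makes the failing inequality cleaner. Note that the two computations agree where they overlap: setting $b_1=b$, $b_2=0$ in your condition $b_1 s\ge a+s+b_2$ recovers exactly the paper's $bs\ge a+s$, which is precisely what the hypothesis $b<\frac{a+s}{s}$ forbids.
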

\begin{proof}
	Assume that $G_{a,b,s}$ admits a $2$-community structure $\Pi=\{C_1,C_2\}$.
	First, note that since the vertices in $B$ are true twins with at least one non-neighbour ($u$), and by Property~\ref{prop-false} we conclude that they must belong to the same community.
	Moreover, if $u \in C_i$ for some $i\in{1,2}$, Property~\ref{allCommunityInneighbourhood} implies that $v_1,\ldots, v_a \in C_{3-i}$.
	Without loss of generality, let $v_1,\ldots, v_a \in C_1$ and $u\in C_2$.
	Next, \Cref{allneighboursSameCommunity} implies that $w_1,\ldots,w_s \in C_1$.
	Finally, since by definition we need to have that $\vert C_2\vert \ge 2$, we conclude that all vertices in $B$ belong to~$C_2$.\\
	The vertices in sets $A$ and $S$ as well as vertex $u$ are trivially satisfied with respect to the partition $\{C_1,C_2\}$ constructed above.
	However, in order for vertices in $B$ to satisfy (\ref{prop:k-comm2}) we must have:
	\begin{equation}
		\label{ineq: vertices in B}
		\frac{b-1}{b}\ge \frac{a}{a+s}\,.
	\end{equation}
	Since by the assumption $b < \frac{a+s}{s}$, \eqref{ineq: vertices in B} is not satisfied by the vertices in~$B$.
	Hence, the graph $G_{a,b,s}$ with $b < \frac{a+s}{s}$ does not admit any $2$-community structure.
\end{proof}

We leave it as an open problem to characterize those disconnected threshold graphs with exactly one isolated vertex that do admit a $2$-community structure.
Also notice that for $k\geq 3$, finding a (generalized) $k$-community structure in threshold graphs is an open problem as well.

\section{Graphs without generalized 2-community structures}
\label{sec-family}

In this section, we introduce an infinite family of connected graphs that do not admit any generalized $2$-community structure.
Notice that in~\cite{bazgan:firstInfiniteFamily}, the authors present a first infinite family of graphs that do not admit any generalized $2$-community structure.
However, the graphs of that infinite family all contain an even number of vertices, while the infinite family introduced here contains both graphs with an even and graphs with an odd number of vertices.\newline
Let $p,l \in \mathbb{N}^{+}$.
The graph $G_{p,l}=(V,E)$ (see \Cref{Gpl}) is defined as follows:\newline
(i) $V=\{u,v_0,\ldots,v_4\}\cup T\cup F$, where $T$ is a set of $p$ vertices $t_1,\ldots,t_p$ which are pairwise true twins and $F$ is a set of $\ell$ vertices $f_1,\ldots,f_\ell$ that are pairwise false twins;\newline
(ii) $u$ is adjacent to all vertices in $V\setminus \{v_0\}$;\newline
(iii)  $E$ contains in addition the edges $v_0v_1,v_1v_2,v_1v_3, v_2v_4$ as well as either the edge $v_2v_3$ or the edge $v_3v_4$;\newline
(iv) finally, $T$ is complete to $F\cup \{u,v_1,v_3\}$.

\begin{figure}[!ht]
	\centering
	\includegraphics[width=0.6\linewidth]{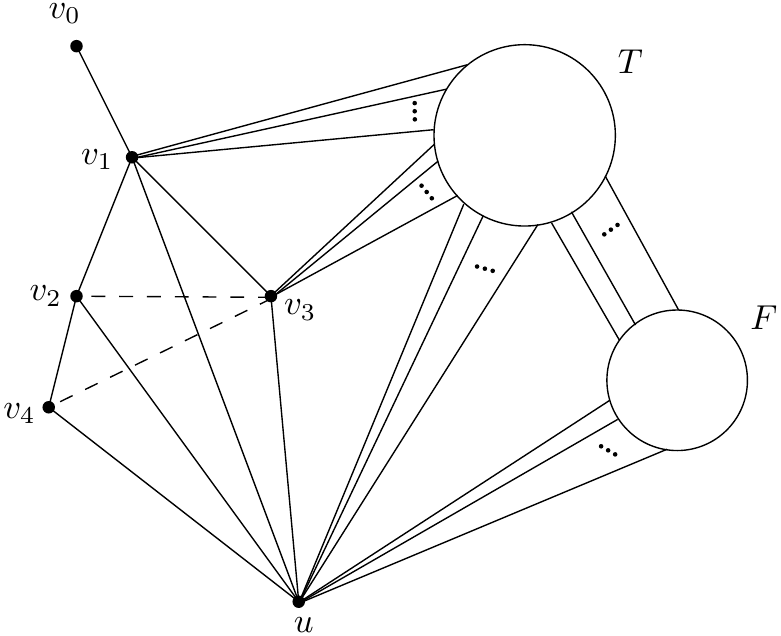}\caption{The graph $G_{p,l}$.
		Exactly one of the dotted edges exists.}
	\label{Gpl}
\end{figure}

Based on this description, we define the infinite family of graphs $\mathcal{G}= \{G_{p,\lceil \frac{p}{2}\rceil}:\ p\geq 3\}$, for which no generalized $2$-community structure exists.
Note that a graph obtained from $G_{p,\lceil \frac{p}{2}\rceil}$ by including both edges $v_2v_3$ and $v_3v_4$ admits a $2$-community structure $\Pi =\{\{v_0,v_1,v_2,v_3,v_4\},T\cup F\cup \{u\}\}$ and a graph obtained from $G_{p,\lceil \frac{p}{2}\rceil}$ by including neither edge $v_2v_3$ nor $v_3v_4$ admits a $2$-community structure  $\Pi =\{\{v_2,v_4,u\} ,T\cup F\cup \{v_0,v_1,v_4\}\}$.

\begin{theorem}
	\label{thm:No2-comm}
	For $p\ge3$, $G_{p,\lceil \frac{p}{2}\rceil} $ does not admit any generalized $2$-community structure.
\end{theorem}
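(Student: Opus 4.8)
The plan is to show that, in any hypothetical generalized $2$-community structure $\Pi=\{C_1,C_2\}$ of $G_{p,\lceil p/2\rceil}$, the structural constraints force an essentially unique assignment of the ``bulk'' vertices $T$, $F$, $u$, and the universal-type vertices, and then to derive a contradiction from the proportionality inequality \eqref{prop:k-comm1} applied to the vertices of $T$. First I would exploit the twin and near-universal structure via the properties already established. The vertices of $T$ are pairwise true twins that are not universal (each $t_i$ misses $v_0$, among possibly others), so by \Cref{prop-false} all of $T$ lies in a single community, say $T\subseteq C_1$; likewise the false twins in $F$ should be pinned down, and $u$, which is adjacent to everything except $v_0$, behaves almost universally. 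The intended payoff is that once $T$ (and the vertices dominating it) are placed, \Cref{allCommunityInneighbourhood} and \Cref{allneighboursSameCommunity} propagate the placement of $F$, $u$, and most of the $v_i$'s, leaving only a bounded number of genuinely free choices among $v_0,\dots,v_4$ and the split point within $F$.

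Next I would set up a careful case analysis on where the small ``gadget'' vertices $v_0,\ldots,v_4$ fall relative to the community containing $T$, treating the two sub-cases coming from clause (iii) (the edge $v_2v_3$ present versus $v_3v_4$ present) in parallel since the argument should be symmetric up to relabeling. In each case the sizes $|C_1|,|C_2|$ are determined up to the parameter $p$ and the number of $F$-vertices on each side, and the neighbourhood counts $|N_{C_i}(t)|$ for $t\in T$ are likewise explicit because $T$ is complete to $F\cup\{u,v_1,v_3\}$ and nonadjacent to the remaining vertices. The key quantitative step is then to plug these explicit counts into \eqref{prop:k-comm1} for a representative vertex $t\in T$ and show the inequality fails; the specific choice $\ell=\lceil p/2\rceil$ is what makes the ratio tip the wrong way, paralleling how the bound $b<\frac{a+s}{s}$ was used in \Cref{no 2-comm in threshold gr}. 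Because communities of size one are now allowed, I would additionally need to rule out (or directly contradict via \Cref{2commUniv}/\Cref{kcommUnivSizeOne}) the degenerate partitions in which some $C_i$ is a singleton, checking that no single vertex has the all-universal-neighbour property required.

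The main obstacle I anticipate is the bookkeeping in the case analysis: unlike the threshold example, here $u$ is \emph{almost} universal (it misses exactly $v_0$) rather than truly universal, and both $T$ and $F$ interact with the same small set $\{u,v_1,v_3\}$, so the pinning arguments from \Cref{allCommunityInneighbourhood} apply only after one verifies the relevant closed-neighbourhood containments, which depend on how the $v_i$'s are split. Ensuring the case distinction is genuinely exhaustive, and that each branch either reduces to the forced assignment or is eliminated immediately by an unsatisfied vertex (via \Cref{prop:preassignment}, ``testing'' a candidate vertex on a community), is where the real care lies. The final numerical contradiction for the surviving branch should be a short computation once the sizes and neighbourhood counts are written out, analogous to inequality \eqref{ineq: vertices in B}, with $\ell=\lceil p/2\rceil$ chosen precisely to defeat the proportionality bound for $t\in T$.
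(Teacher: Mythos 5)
Your overall framework is the same as the paper's: rule out singleton communities via \Cref{2commUniv}, keep $T$ together via \Cref{prop-false}, pin $v_1$ and $u$ via \Cref{allneighboursSameCommunity} and \Cref{allCommunityInneighbourhood}, and then force the remaining vertices by testing them with \Cref{prop:preassignment}. However, there is a genuine gap in your key quantitative step: the final contradiction cannot be obtained from a vertex $t\in T$. Once the forcing arguments have run their course, $F$ always ends up in the same community as $T$ (when $T\subseteq C_2$, each $f_j$ has $N(f_j)=T\cup\{u\}\subseteq C_2$, so $f_j\in C_2$ by \Cref{allneighboursSameCommunity}; when $T\subseteq C_1$, each $f_i$ is individually forced into $C_1$ by testing), and then every $t\in T$ is richly connected inside its own community. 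Concretely, in the surviving configuration of the case $T\subseteq C_1$, namely $C_1=T\cup F\cup\{v_0,v_1\}$ and $C_2=\{u,v_2,v_3,v_4\}$, one gets $\vert N_{C_1}(t)\vert/(\vert C_1\vert-1)=(p+\ell)/(p+\ell+1)$ against $\vert N_{C_2}(t)\vert/\vert C_2\vert=2/4$, and in the case $T\subseteq C_2$ with $C_2=T\cup F\cup\{u,v_3\}$ the ratio of $t$ inside $C_2$ equals $1$; in both cases \eqref{prop:k-comm2} holds comfortably for $t$, for every value of $\ell$. Your analogy with inequality \eqref{ineq: vertices in B} in \Cref{no 2-comm in threshold gr} does not transfer: there the true twins $B$ had no neighbours in their community other than each other, whereas here $T$ is complete to $F\cup\{u,v_1,v_3\}$ and to itself, so the twins are never the vertices that fail.

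The vertex that actually fails, and the reason for the choice $\ell=\lceil p/2\rceil$, is $v_1$. After $v_2$ and $v_4$, and then $v_3$, are forced by successive tests (the order matters, since each application of \Cref{prop:preassignment} relies on the previously forced assignments), one reaches in the case $T\subseteq C_1$ the requirement $(p+1)/(p+\ell+1)\ge 3/4$ for $v_1$, which fails because $\ell\ge p/2$ gives $(p+1)/(p+\ell+1)\le(2p+2)/(3p+2)<3/4$ for $p\ge 3$; and in the case $T\subseteq C_2$ the requirement $2/3\ge(p+2)/(p+\ell+2)$, which fails because $\ell<p/2+1$ gives $(p+2)/(p+\ell+2)>2/3$. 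So your plan is repairable, but only by redirecting the final computation from $T$ to $v_1$ and spelling out the sequential forcing of $v_2$, $v_4$, $v_3$; as written, the surviving branch of your case analysis would not yield a contradiction.
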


\begin{proof}
	Let $\ell=\lceil \frac{p}{2}\rceil$.
	It follows from \Cref{2commUniv} that $G_{p,\ell}$ does not admit a generalized $2$-community structure  $\Pi=\{C_1,C_2\}$ such that $\vert C_i \vert =1$, for some $i\in \intInterval{2}$.
	Hence, assume by contradiction that $G_{p,\ell}$ admits a $2$-community structure $\Pi=\{C_1,C_2\}$, i.e., such that $\vert C_i\vert \ge 2$, for all $i\in{1,2}$. Without loss of generality, we may assume that $v_0\in C_1$. Then $v_1$ must be in $C_1$ as well, otherwise $N_{C_1}(v_0) = \emptyset$, a contradiction with \Cref{allneighboursSameCommunity}.
	Moreover, by \Cref{allCommunityInneighbourhood}, we have $u\in C_2$.

	Let us consider now the set of true twins~$T$.
	We know from \Cref{prop-false} that they all belong to the same community.
	We distinguish two cases.

	\begin{itemize}
		\item First, assume that $t_i \in C_2$ for all $i\in\intInterval{p}$.
		      Then for all $j\in \intInterval{\ell}$, $N(f_j)=T\cup \{u\}$ implies that $N_{C_1}(f_j)=\emptyset$.
		      It follows from \Cref{allneighboursSameCommunity} that $f_j\in C_2$ for all $j\in \intInterval{\ell}$.
		      On \Cref{Gpl_case1}, we illustrate the current assignment of vertices to communities.

		      \begin{figure}[!ht]
			      \centering
			      \includegraphics[width=0.60\linewidth]{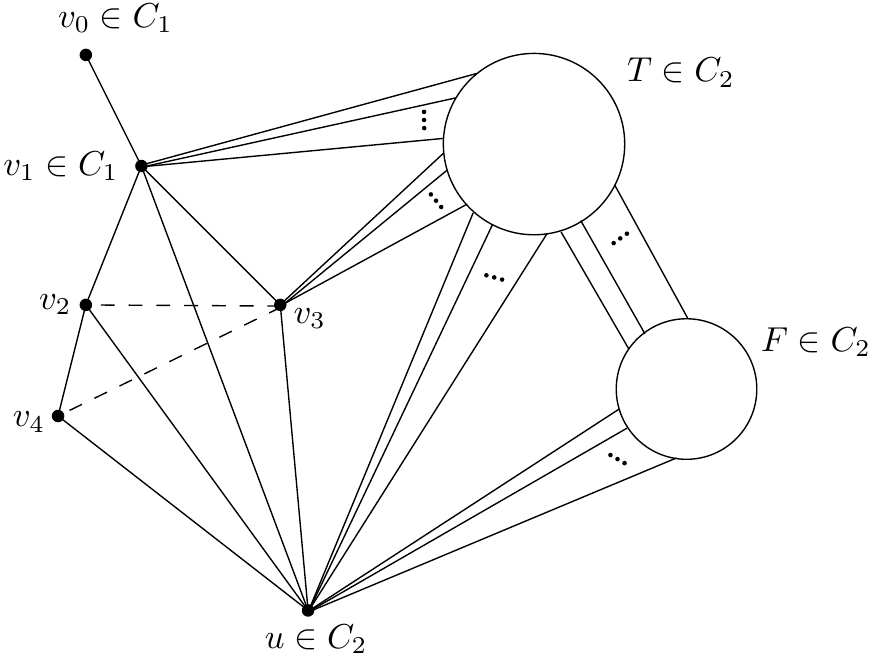}
			      \caption{The graph $G_{p,l}$ and current the assignment of vertices to communities.}
			      \label{Gpl_case1}
		      \end{figure}
		      Let us test $v_2$ on~$C_2$.
		      If $v_2v_3 \in E$, then $\displaystyle\frac{\vert N_{C_2}(v_2)\vert}{\vert C_2 \vert -1}=\displaystyle\frac{3}{p+\ell+3}$ and $\displaystyle\frac{\vert N_{C_1}(v_2)\vert}{\vert C_1\vert}=\frac{1}{2}$.
		      If $v_4v_3\in E$, then $\displaystyle\frac{\vert N_{C_2}(v_2)\vert}{\vert C_2 \vert -1}=\displaystyle\frac{2}{p+\ell+2}$ and $\displaystyle\frac{\vert N_{C_1}(v_2)\vert}{\vert C_1\vert}=\frac{1}{3}$.
		      Since $p+\ell\ge 5$, \eqref{prop:k-comm2} fails for $v_2$ in both cases, and we conclude from \Cref{prop:preassignment} that $v_2 \in C_1$.
		      Further, let us now test $v_4$ on~$C_2$.
		      If $v_2v_3 \in E$, then we have that $\displaystyle\frac{\vert N_{C_2}(v_4)\vert}{\vert C_2 \vert -1}=\displaystyle\frac{1}{p+\ell+1}$ and $\displaystyle\frac{\vert N_{C_1}(v_4)\vert}{\vert C_1 \vert}= \frac{1}{4}$ if $v_2v_3\in E$, respectively $\displaystyle\frac{\vert N_{C_2}(v_4)\vert}{\vert C_2 \vert -1}=\displaystyle\frac{2}{p+\ell+2}$ and $\displaystyle\frac{\vert N_{C_1}(v_4)\vert}{\vert C_1 \vert}= \frac{1}{3}$  if $v_3v_4\in E$.
		      Since by assumption $p+\ell\ge 5$,  \eqref{prop:k-comm2} fails for $v_4$, and we conclude that $v_4\in C_1$.

		      Let us consider vertex~$v_3$.
		      Assume that $v_3 \in C_1$.
		      Then $\displaystyle \frac{\vert N_{C_1}(v_3)\vert}{\vert C_1 \vert -1}=\displaystyle\frac{1}{2}$ and $\displaystyle \frac{\vert N_{C_2}(v_3)\vert}{\vert C_2 \vert}=\displaystyle\frac{1+p}{1+p+\ell}> \frac{1+p}{1+p +\frac{p}{2} +1} > \frac{1+p}{2+2p} = \frac{1}{2}$, a contradiction since \eqref{prop:k-comm2} fails for~$v_3$.
		      So $v_3 \in C_2$.
		      But then $\displaystyle \frac{\vert N_{C_1}(v_1)\vert}{\vert C_1 \vert -1}=\displaystyle\frac{2}{3}$ and $\displaystyle \frac{\vert N_{C_2}(v_1)\vert}{\vert C_2 \vert}=\frac{p+2}{p+\ell+2}>\frac{p+2}{p+\frac{p}{2}+1+2}=\frac{2}{3}$, a contradiction since \eqref{prop:k-comm2} fails for~$v_1$.

		      We conclude that $t_i\not\in C_2$, for all $i\in\intInterval{p}$.

		\item Assume that $t_i \in C_1$ for all $i\in\intInterval{p}$.
		      On \Cref{Gpl_case2}, we illustrate the current assignment of vertices to communities.

		      \begin{figure}[!ht]
			      \centering
			      \includegraphics[width=0.55\linewidth]{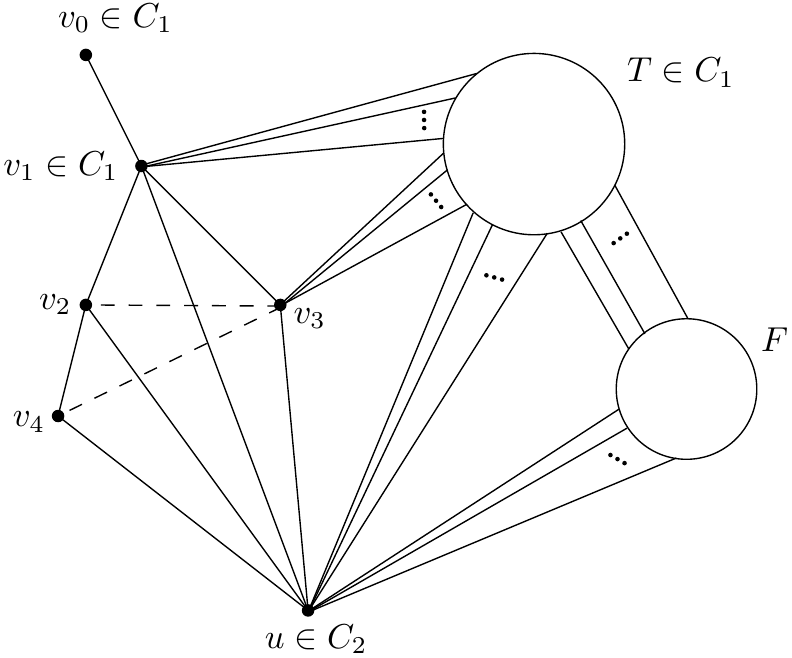}
			      \caption{The graph $G_{p,l}$ and the current assignment of vertices to  communities.}
			      \label{Gpl_case2}
		      \end{figure}

		      Let us test $v_4$ on~$C_1$.
		      Then, if $v_2v_3\in E$, we have  $\displaystyle\frac{\vert N_{C_1}(v_4)\vert}{\vert C_1 \vert -1}=\displaystyle\frac{1}{3+p}$ and $\displaystyle\frac{\vert N_{C_2}(v_4)\vert}{\vert C_2 \vert}=\frac{1}{2+\ell}$.
		      Since by assumption we have $p >\ell$, \eqref{prop:k-comm2} fails.
		      Similarly, if $v_3v_4 \in E$, we have $\displaystyle\frac{\vert N_{C_1}(v_4)\vert}{\vert C_1 \vert -1}=\displaystyle\frac{2}{4+p}$ and $\displaystyle\frac{\vert N_{C_2}(v_4)\vert}{\vert C_2 \vert} = \frac{1}{1+\ell}>\frac{1}{1+\frac{1}{2}(p+2)} = \frac{2}{p+4}$, a contradiction since \eqref{prop:k-comm2} fails.
		      Hence, we conclude from \Cref{prop:preassignment} that $v_4 \in C_2$ in both cases.
		      Further, let us now test $v_2$ on~$C_1$.
		      Then, if $v_2v_3\in E$, $\displaystyle\frac{\vert N_{C_1}(v_2)\vert}{\vert C_1 \vert -1}=\displaystyle\frac{2}{3 + p} $ and $\displaystyle\frac{\vert N_{C_2}(v_2)\vert}{\vert C_2 \vert}= \frac{2}{2+\ell}$.
		      Since by assumption we have $p > \ell$, \eqref{prop:k-comm2} fails for~$v_2$.
		      If $v_3v_4\in E$, $\displaystyle\frac{\vert N_{C_1}(v_2)\vert}{\vert C_1 \vert -1}=\displaystyle\frac{1}{2+p}$ and  $\displaystyle\frac{\vert N_{C_2}(v_2)\vert}{\vert C_2 \vert}= \frac{2}{3+\ell} > \frac{2}{3+\frac{1}{2}(p+2)} = \frac{4}{8+p}$.
		      Hence, in order for \eqref{prop:k-comm2} to hold, we must have $p\geq4p$, a contradiction since $p\geq 3$.
		      Thus, we conclude from \Cref{prop:preassignment} that $v_2 \in C_2$ in both cases.

		      Further, let us consider the vertices in~$F$.
		      We show that $f_i\in C_1$ for all $i\in\intInterval{\ell}$.
		      Let us test $f_i$ on $C_2$ (recall that F is a set of false twins, which are not adjacent to each other, and therefore do not necessarily belong to the same community), for any $i\in \intInterval{\ell}$.
		      Then, independently whether $v_2v_3\in E$ or $v_3v_4\in E$, $\displaystyle \frac{\vert N_{C_2}(f_i)\vert }{\vert C_2\vert -1}=\displaystyle\frac{1}{3}$ and $\displaystyle \frac{\vert N_{C_1}(f_i)\vert }{\vert C_1\vert} = \frac{p}{p+\ell+2}$.
		      Since by assumption $p \ge 3$ and $p>\ell$, we get a contradiction because \eqref{prop:k-comm2} fails for~$f_i$.
		      Hence, we conclude from \Cref{prop:preassignment} that $f_i \in C_1$ for all $i\in\intInterval{\ell}$.

		      Finally, let us consider $v_3$ and test it on~$C_1$.
		      Then, independently whether $v_2v_3\in E$ or $v_3v_4\in E$,  $\displaystyle \frac{\vert N_{C_1}(v_3)\vert }{\vert C_1\vert -1}=\displaystyle\frac{1+p}{2+p+\ell}$ and $\displaystyle \frac{\vert N_{C_2}(v_3)\vert }{\vert C_2\vert} =\frac{2}{3}$.
		      Since by assumption we have $\frac{1}{2}(p-1)<\ell$, we get that $\displaystyle\frac{1+p}{2+p+\ell} < \frac{1+p}{2+p+\frac{1}{2}(p-1)} = \frac{2(1+p)}{3(1+p)} = \frac{2}{3}$, a contradiction since \eqref{prop:k-comm2} fails for~$v_3$.
		      Hence, it follows from \Cref{prop:preassignment} that $v_3 \in C_2$.
		      Then, independently whether $v_2v_3\in E$ or $v_3v_4\in E$, in order to satisfy \eqref{prop:k-comm2} for $v_1$, we must have $\displaystyle\frac{\vert N_{C_1}(v_1)\vert}{\vert C_1 \vert -1}=\displaystyle\frac{p+1}{p+\ell+1}\ge \displaystyle\frac{3}{4}=\displaystyle\frac{\vert N_{C_2}(v_1)\vert}{\vert C_2 \vert}$.
		      But, by the argument above $\displaystyle\frac{p+1}{p+\ell+1}\leq\frac{p+1}{p+\frac{p}{2}+1}=\frac{2p+2}{3p+2}< \frac{3}{4}$.
		      Hence, we get a contradiction since \eqref{prop:k-comm2} fails for $v_1$, and we therefore cannot have $f_i \in C_1$ for all $i\in \{1,\ldots,p\}$.
	\end{itemize}

	We conclude from the above that $G_{p,\lceil \frac{p}{2}\rceil}$ does not admit any $2$-community structure.
\end{proof}

Note that one can easily define an integer linear program (ILP) in order to check whether a graph $G$ admits a (generalized) $2$-community structure or not.
Using this approach, we could test all connected non-isomorphic graphs up to 11 vertices for the existence of a $2$-community structure.
The complete list of these graphs was obtained by using the algorithm developed in~\cite{nauty}.
Our main findings are that, excluding the stars (that can be discarded due to \Cref{k-matching}):

\begin{itemize}
	\item all connected graphs from 4 to 9 vertices admit a $2$-community structure;
	\item with 10 vertices, only 4 connected graphs do not admit a $2$-community structure; these graphs all belong to the family presented in~\cite{bazgan:firstInfiniteFamily};
	\item with 11 vertices, there are only 6 connected graphs not admitting any $2$-community structure;
	\item with 12 vertices, there are many (more than 100) connected graphs not admitting any $2$-community structure.
\end{itemize}

These findings may help in order to better understand the structure of those graphs that do not admit any $2$-community structure.

\section{Conclusion}
\label{sec-conclusion}

In this paper, we investigated (generalized) $k$-community structures and gave new results for forests and (connected) threshold graphs.
We also presented a first infinite family of graphs that do not admit any generalized $2$-community structure and such that the graphs may contain an even or an odd number of vertices.
There remain several interesting open questions, some of which we present hereafter.

\begin{itemize}
	\item What is the complexity of deciding whether a given graph admits a $2$-community structure?
	\item Which disconnected threshold graphs admit a $2$-community structure?
	\item Which (connected) threshold graphs admit a $k$-community structure, for $k\geq 3$?
	\item Can we extend our results on (generalized) $2$-community structures to larger graph classes? A natural extension would be split graphs, eventually leading towards chordal graphs (which generalize both split graphs and forests).
\end{itemize}

\bibliographystyle{elsarticle-num} 
\bibliography{sn-bibliography.bib}

\end{document}